\newcommand*{\itone}{I_{IT_1}} 
\newcommand*{\ittwo}{I_{IT_2}}
\newcommand*{\stone}{I_{ST_1}}  
\newcommand*{\sttwo}{I_{ST_2}}
\newcommand*{\il}{I_{IL}}  
\newcommand*{\ds}{I_{DS}}
\newcommand*{\sbone}{I_{SB_1}}  
\newcommand*{\sbtwo}{I_{SB_2}}
\newcommand*{\N}{\mathbb{N}}
\newcommand*{\Pol}{\mathbb{P}}
\newcommand*{\Tri}{\mathbb{T}}
\newcommand*{\charfunc}{\mathds{1}}
\newtheorem{theorem}{Theorem}
\newtheorem{corollary}[theorem]{Corollary}
\newtheorem{proposition}[theorem]{Proposition}
\newtheorem{remark}[theorem]{Remark}
\begin{document}

\title{A Galerkin BEM for high-frequency scattering problems based on
frequency dependent changes of variables}
\author{
Fatih Ecevit\thanks{Corresponding author. Email: fatih.ecevit@boun.edu.tr}
\\[2pt]
{\small Bo\u{g}azi\c{c}i University, Department of Mathematics, Bebek TR 34342, Istanbul, Turkey}
\\[6pt]
and
\\[6pt]
Hasan H\"{u}seyin Eruslu\thanks{Email: heruslu@udel.edu}
\\[2pt]
{\small University of Delaware, Department of Mathematical Sciences, Newark, DE 19716, USA}
}

\date{}

\maketitle

\begin{abstract}
In this paper we develop a class of efficient Galerkin boundary element methods for the solution
of two-dimensional exterior single-scattering problems. Our
approach is based upon construction of Galerkin approximation spaces confined to the asymptotic
behaviour of the solution through a certain direct sum of appropriate function spaces weighted by
the oscillations in the incident field of radiation. Specifically, the function spaces in the
illuminated/shadow regions and the shadow boundaries are simply algebraic polynomials whereas
those in the transition regions are generated utilizing novel, yet simple, \emph{frequency dependent
changes of variables perfectly matched with the boundary layers of the amplitude} in these regions.
While, on the one hand, we rigorously verify for smooth convex obstacles that these methods require
only an $\mathcal{O}\left( k^{\epsilon} \right)$ increase in the number of degrees of freedom to
maintain any given accuracy independent of frequency, and on the other hand, remaining in the
realm of smooth obstacles they are applicable in more general single-scattering configurations.
The most distinctive property of our algorithms is their \emph{remarkable success} in approximating
the solution in the shadow region when compared with the algorithms available in the literature.
\end{abstract}

\section{Introduction}
High-frequency scattering problems have found and continue to find immense interest
in the present day computational science. Indeed, over the course of last two decades,
very efficient and effective algorithms have been devised for the numerical solution of
scattering problems based on variational
\cite{HesthavenWarburton04,DaviesEtAl09,Boffi10}
and integral equation
\cite{BrunoKunyansky01,BanjaiHackbusch05,TongChew10,BrunoEtAl13}
formulations. In exterior scattering simulations, methods that rest on variational formulations
naturally demand the design and implementation of efficient non-reflecting boundary
conditions \cite{EnquistMajda77, Givoli04, GroteSim11} to effectively represent the radiation
condition at infinity. On the other hand, solvers based on integral equation formulations
(cf. the survey \cite{Chandler-WildeEtAl12}) readily encode the radiation condition
into the equation by choosing an outgoing fundamental solution. Moreover, for surface
scattering simulations considered in this paper, they enable \emph{phase extraction},
that takes a particularly simple form in single-scattering configurations, and this turns
the problem into the estimation of an amplitude whose oscillations are essentially
independent of frequency.

In this paper, we develop a class of efficient Galerkin boundary element
methods for the solution of two-dimensional exterior single-scattering problems
Our approach is based upon construction of Galerkin
approximation spaces confined to the known asymptotic behaviour of the aforementioned
amplitude. These spaces are defined as direct sums of appropriate function spaces
weighted by the oscillations in the incident field of radiation. Specifically, the function
spaces in the illuminated/shadow regions and the shadow boundaries are simply algebraic
polynomials whereas those in the transition regions (filling up the remaining parts of the
boundary of the scatterer) are generated utilizing novel, yet simple, \emph{frequency dependent
changes of variables perfectly matched with the boundary layers of the amplitude} in these
regions. While, on the one hand, we rigorously verify for smooth convex obstacles that
these methods require only an $\mathcal{O}\left( k^{\epsilon} \right)$ increase in the number
of degrees of freedom to maintain any given accuracy independent of frequency, and on
the other hand, remaining in the realm of smooth obstacles they are applicable in more
general single-scattering configurations. The most distinctive property of our algorithms
is their \emph{remarkable success} in approximating the solution in the shadow region
when compared with the algorithms available in the literature.

Indeed, hybrid integral equation methodologies reinforcing the asymptotic characteristics
of the unknown into the solution strategy have now become the usual practice in the field. 
The first attempt in this direction is due to N\'{e}d\'{e}lec  et al. \cite{AbboudEtAl94,AbboudEtAl95}
where, considering the impedance boundary condition, an $h$-version boundary element
method was utilized in conjunction with the method of stationary phase
for the evaluation of highly oscillatory integrals (see \cite{Darrigrand02} for a
fast multipole variant, and \cite{GaneshHawkins11} for a fully discrete
three-dimensional version). More relevant to our work is the Nystr\"{o}m method proposed
by Bruno et al. \cite{BrunoEtAl04} for the solution of sound-soft scattering problems in the
exterior of smooth convex obstacles. The method therein displays the capability of delivering
solutions within any prescribed accuracy in frequency independent computational times.
While, in this approach, the boundary layers of the slowly varying amplitude around the shadow boundaries
are resolved through a cubic root change of variables, the associated highly oscillatory
integrals are evaluated to high order utilizing novel extensions of the method of stationary
phase (for a three-dimensional variant of this approach we refer to
\cite{BrunoGeuzaine07}). The algorithm in \cite{BrunoEtAl04} has had a
great impact in computational scattering community and, following the basic principles
therein, a number of alternative single-scattering solvers have been developed.
Giladi \cite{Giladi07} have used a collocation method that integrates Keller's geometrical
theory of diffraction to account for creeping rays in the shadow region. Huybrechs et al.'s
collocation method \cite{HuybrechsVandewalle07} have utilized the numerical steepest
descent method in evaluating highly oscillatory integrals and additional collocation points
around shadow boundaries to obtain sparse discretizations. The first rigorous numerical
analysis relating to a p-version boundary element implementation of these approaches,
due to Dom\'{i}nguez et al. \cite{DominguezEtAl07}, has displayed that an increase of
$\mathcal{O}(k^{1/9})$ in the number of degrees of freedom is sufficient to preserve a
certain accuracy as $k \to \infty$.

The aforementioned methods remain asymptotic due to approximation of the solution
by zero in the deep shadow region. In order to cure this deficiency, we have recently
developed \emph{frequency-adapted Galerkin boundary element methods} \cite{EcevitOzen16}.
Our approach therein was based on utilization of appropriate integral equation
formulations of the scattering problem and design of Galerkin approximation spaces
as the direct sum of algebraic or trigonometric polynomials weighted by the oscillations
in the incident field of radiation. The number of direct summands, namely $4m$ (one for
each of the illuminated and deep shadow regions, and the two shadow boundaries; and
$m-1$ for each one of the four transition regions), had to increase as $\mathcal{O}(\log k)$
in order to obtain optimal error bounds. Moreover, from a theoretical perspective, we
have rigorously shown that these methods can be tuned to demand an increase of
$\mathcal{O}(k^{\epsilon})$ in the number of degrees of freedom to maintain a
prescribed accuracy independent of frequency. In connection with smooth convex
scatterers, this is the best theoretical result available in the literature.

The new Galerkin boundary element methods we develop in
this paper, in contrast, utilize novel \emph{frequency dependent changes of
variables perfectly matched with the asymptotic behaviour of the solution in the
transition regions} and thereby eliminate the requirement of increasing the number
of direct summands defining the Galerkin approximation spaces with increasing
wavenumber $k$. While this clearly displays the ease of implementation of these
new schemes when compared with our approach in \cite{EcevitOzen16}, as we
rigorously verify, an increase of $\mathcal{O}(k^{\epsilon})$ is still sufficient to fix the
approximation error with increasing $k$ but with savings of $\mathcal{O}(\sqrt{\log k})$
in the necessary number of degrees of freedom. Perhaps more importantly, these
new schemes yield \emph{significantly superior accuracy in the shadow region} as depicted
through the numerical tests. This is also true when the results are compared with
the recent approach taken by Asheim and Huybrechs \cite{AsheimHuybrechs14}
wherein more advanced phase extraction techniques (unfortunately not supported
by rigorous numerical analysis) based on Melrose-Taylor asymptotics
\cite{MelroseTaylor85} are used.

Parallel with the schemes relating to smooth convex obstacles, Galerkin boundary
element methods based on phase extraction have also been developed for half-planes
and convex polygons where the number of degrees of freedom is either fixed or
must increase in proportion to $\log k$ to fix the error with increasing wavenumber
$k$. For an extended review of these procedures, we refer to the survey article
\cite{Chandler-WildeEtAl12}.

As for multiple scattering problems, we refer to Bruno et al. \cite{BrunoEtAl05}
for an extension of the algorithm in \cite{BrunoEtAl04} to a finite collection of
convex obstacles (see also \cite{EcevitReitich09} and \cite{AnandEtAl10} for a rigorous
analysis of this approach in two- and three-dimensional settings respectively,
and Boubendir et al. \cite{BoubendirEtAl16} for the acceleration of this procedure through use
of \emph{dynamical} Krylov subspaces and Kirchhoff approximations), and to
Chandler-Wilde et al. \cite{Chandler-WildeEtAl15} for a class of nonconvex
polygons. 

The paper is organized as follows. In \S\ref{sec:2}, we describe the exterior
sound-soft scattering problem along with the relevant integral equations and
associated Galerkin formulations. In \S\ref{sec:3}, we introduce the new Galerkin
schemes for high-frequency single-scattering problems, and state the associated
convergence theorem for smooth convex obstacles which constitutes the main
result of the paper. To allow a direct comparison, in the same section, we also
present a more general version of our algorithm
in \cite{EcevitOzen16} along with the corresponding approximation properties.
The proof of the main result of the paper is given in \S\ref{sec:4}. Finally, the
numerical tests appearing in \S\ref{sec:5} provide a comparison of our methods
developed herein and \cite{EcevitOzen16}. Specifically they display that these
new schemes 1) attain the same global accuracy with a reduced number of
degrees of freedom, 2) provide significantly more accurate solutions in the shadow
regions, and 3) are applicable not only for smooth convex obstacles but also in
more general single-scattering configurations.
 
\section{The scattering problem and Galerkin formulation}
\label{sec:2}

The two-dimensional scattering problem we consider in this manuscript is related
with the determination of the \emph{scattered field} $u$ that satisfies the Helmholtz
equation
\[
	\left( \Delta + k^{2} \right) u = 0
\]
in the exterior of a smooth compact obstacle $K$, the Sommerfeld radiation
condition at infinity that amounts to requiring
\[
	\lim_{\left| x \right| \to \infty} \left| x \right|^{1/2} \left[ \partial_{\left| x \right|} - i k \right] u = 0
\]
uniformly for all directions $x/\left| x \right|$, and the sound-soft boundary condition
\[
	u = - u^{\rm inc}	
\]
for a plane-wave incidence $u^{\rm inc} \left( x \right) = e^{ik \alpha \cdot x}$
with direction $\alpha$ ($\left| \alpha \right| =1$) impinging on $K$.

As is well known, the scattered field $u$ can be reconstructed by means of either
the \emph{direct} or the \emph{indirect approach} \cite{ColtonKress92}. As in the
previous attempts aimed at frequency-independent simulations
\cite{BrunoEtAl04,Giladi07,HuybrechsVandewalle07,DominguezEtAl07,EcevitOzen16},
however, here we favour the former wherein the associated (unknown) surface density
is the \emph{normal derivative of the total field} (known as the \emph{surface current}
in electromagnetism) $\eta = \partial_{\nu} \left( u+ u^{\rm inc} \right)$ on $\partial K$.
Once $\eta$ is available, the scattered field can be recovered through the
\emph{single-layer potential}
\[
	u(x) = - \int_{\partial K} \Phi(x,y) \, \eta(y) \, ds(y)
\]
where
\[
	\Phi(x,y) = \dfrac{i}{4} \ H_{0}^{(1)}(k|x-y|)
\]
is the fundamental solution of the Helmholtz equation, and $H_{0}^{(1)}$ is the
Hankel function of the first kind and order zero. 

While the new unknown, namely $\eta$, can be expressed as the unique solution
of a variety of integral equations \cite{ColtonKress92,SpenceEtAl11} taking the
form of an operator equation
\begin{equation}
\label{eq:CFIE}
	\mathcal{R}_k \, \eta = f_k
\end{equation}
in $L^{2} \left( \partial K \right)$, 
the solution of (\ref{eq:CFIE}) corresponds exactly to that of
\begin{equation}
\label{eq:weakformulation}
	B_k(\mu,\eta)
	= F_k(\mu),
	\quad
	\text{for all }
	\mu \in L^{2}(\partial K),
\end{equation}
where the sesquilinear form $B_k$ and bounded linear functional $F_k$ are
defined by
\begin{equation}
\label{eq:sesqui}
	B_k(\mu,\eta) = \langle \mu, \mathcal{R}_k \, \eta \rangle_{L^{2}(\partial K)}
	\quad
	\text{and}
	\quad
	F_k(\mu) = \langle \mu, f_k \rangle_{L^{2}(\partial K)}.
\end{equation}
Equation \eqref{eq:weakformulation}, in turn, is amenable to a treatment by the
Galerkin method wherein one determines the \emph{Galerkin solution} $\hat{\eta}$
approximating the exact solution $\eta$ in a given finite dimensional \emph{Galerkin
subspace} $\hat{X}_k$ requiring that
\begin{equation}
\label{eq:Galerkinsolution}
	B_k(\hat{\mu},\hat{\eta})
	= F_k(\hat{\mu}),
	\quad
	\text{for all }
	\hat{\mu} \in \hat{X}_k.
\end{equation}  
Further, provided the sesquilinear form $B_k$ is continuous with a continuity
constant $C_k$ and strictly coercive with a coercivity constant $c_k$ so that
\[
	\left| B_k(\mu,\eta) \right| \le C_k \, \Vert \mu \Vert \Vert \eta \Vert
	\qquad
	\text{and}
	\qquad
	\mathcal{R}e \ B_k(\mu,\mu) \ge c_k \, \Vert \mu \Vert^{2}
\]
for all $\mu, \eta \in L^2(\partial K)$, equation \eqref{eq:Galerkinsolution} is
uniquely solvable and C$\acute{e}$a's lemma entails
\begin{equation}
\label{eq:Cea}
	\Vert \eta -\hat{\eta} \Vert
	\leq \dfrac{C_k}{c_k}
	\inf_{\hat{\mu} \in \hat{X}_k} \Vert \eta- \hat{\mu} \Vert.
\end{equation}
Among the aforementioned integral equations, in this connection, \emph{combined
field} (CFIE) and \emph{star-combined} (SCIE) integral equations step forward as
the continuity and coercivity properties of the associated sesquilinear forms are
well-understood. More precisely, the sesquilinear form associated with CFIE is
known to be continuous (for $k>0$) and coercive (for $k \gg 1$) for convex domains
with piecewise analytic $C^3$ boundaries with $C_k = \mathcal{O} \left( k^{1/2} \right)$
as $k \to \infty$ and $c_k \ge 1/2$ for all sufficiently large $k$ \cite{DominguezEtAl07,SpenceEtAl15}
(see also \cite{BaskinEtAl16} for an extension to non-trapping domains).
On the other hand, the sesquilinear form corresponding to SCIE is both continuous
and coercive (for $k>0$) for star-shaped Lipschitz domains with
$C_k = \mathcal{O} \left( k^{1/2} \right)$ as $k \to \infty$ and $c_k$ independent of
$k$ \cite{SpenceEtAl11}.

\section{Galerkin approximation spaces based on frequency dependent changes of variables}
\label{sec:3}

The developments in this section are independent of the integral equation used
as they relate, specifically, to the construction of Galerkin approximation spaces
$\hat{X}_k$ whose dimension should increase only as $\log k$ with increasing
wave number $k$ to ensure that the relative error
\[
	\inf_{\hat{\mu} \in \hat{X}_k}  \dfrac{\Vert \eta- \hat{\mu} \Vert}{\Vert \eta \Vert}
\]
in connection with the infimum on the right-hand side of \eqref{eq:Cea} is independent
of $k$.

Considering a smooth convex obstacle $K$ illuminated by a plane-wave
$u^{\rm inc} (x) = e^{ik \alpha \cdot x}$, our approach is based on 
\emph{phase extraction}
\[
	\eta \left( x \right) = e^{ik \alpha \cdot x} \, \eta^{\rm slow} \left( x \right),
	\qquad
	x \in \partial K,
\]
and design of approximation spaces adopted to the asymptotic behavior of the amplitude
$\eta^{\rm slow}$ that was initially characterised by Melrose and Taylor \cite{MelroseTaylor85}
around the shadow boundaries which we have later generalized to the entire boundary
\cite{EcevitReitich09}. 

\begin{theorem}\cite[Corollary 2.1]{EcevitReitich09}
\label{thm:ecevit}
Let $K \subset \mathbb{R}^{2}$ be a compact, strictly convex set with smooth boundary
$\partial K$. Then $\eta^{\rm slow} = \eta^{\rm slow}(x,k)$ belongs to the H\"{o}rmander
class $S^{1}_{2/3,1/3} \left( \partial K \times \left( 0, \infty \right) \right)$ and admits an
asymptotic expansion
\[
	\eta^{\rm slow}(x,k)
	\sim
	\sum_{p,q \ge 0} a_{p,q}\left( x,k \right)
\]
with
\[
	a_{p,q}(x,k) = k^{2/3-2p/3-q} \, b_{p,q}(x)  \, \Psi^{(p)}(k^{1/3}Z(x))
\]
where $b_{p,q}$ and $\Psi$ are complex-valued $C^{\infty}$ functions and $Z$
is a real-valued $C^{\infty}$ function that is positive on the illuminated region
$\partial K^{IL} = \{ x \in \partial K : \alpha \cdot \nu(x) < 0 \}$, negative on the
shadow region $\partial K^{SR} = \{ x \in \partial K : \alpha \cdot \nu(x) > 0 \}$,
and vanishes precisely to the first order on the shadow boundaries
$\partial K^{SB} = \{ x \in \partial K : \alpha \cdot \nu(x) = 0 \}$. Moreover, the
function $\Psi$ admits the asymptotic expansion
\[
	\Psi(\tau) \sim \sum_{j=0}^{\infty} c_{j} \tau^{1-3j}
	\qquad
	\text{as } \tau \to \infty,
\]
and $\Psi$ is rapidly decreasing in the sense of Schwartz as
$\tau \to -\infty$.
\end{theorem}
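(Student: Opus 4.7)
The plan is to derive this corollary from the Melrose--Taylor parametrix for the diffraction problem \cite{MelroseTaylor85}, together with the phase extraction procedure and a matching argument that extends the local asymptotics from a neighbourhood of the shadow boundary $\partial K^{SB}$ to all of $\partial K$. The starting point is the Melrose--Taylor representation of the scattered field $u$ near $\partial K^{SB}$ as an oscillatory integral built out of a fixed Fock-type function (the $\Psi$ of the statement) composed with the boundary defining function $k^{1/3}Z(x)$ of the shadow boundary. Write the total field as $u^{\rm tot}(x,k)=e^{ik\alpha\cdot x}\,A(x,k)$, where $A$ is an Airy-type amplitude. The parametrix yields an asymptotic expansion for $A$ of the form $A(x,k)\sim \sum_{p,q\ge 0}\beta_{p,q}(x)\,k^{-p/3-q}\,\Psi^{(p)}(k^{1/3}Z(x))$ modulo smoothing errors.

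First I would take $\partial_\nu$ of $u^{\rm tot}=e^{ik\alpha\cdot x}A(x,k)$ and factor the oscillation $e^{ik\alpha\cdot x}$ back out. Two contributions appear: normal differentiation of the fast phase produces a factor $ik(\alpha\cdot\nu)$, and normal differentiation of $\Psi^{(p)}(k^{1/3}Z)$ produces $k^{1/3}(\partial_\nu Z)\,\Psi^{(p+1)}(k^{1/3}Z)$. Collecting powers of $k$ and relabelling the indices $(p,q)$ gives exactly the expansion
\[
 \eta^{\rm slow}(x,k)\sim \sum_{p,q\ge 0} k^{2/3-2p/3-q}\,b_{p,q}(x)\,\Psi^{(p)}(k^{1/3}Z(x)),
\]
where the smooth coefficients $b_{p,q}$ are determined from the $\beta_{p,q}$, from $\alpha\cdot\nu$, and from the (smooth, nonvanishing) derivatives of $Z$ along $\partial K$. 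The sign conventions and vanishing properties of $Z$ are inherited from its role as the Melrose--Taylor boundary defining function for $\partial K^{SB}$, and the stated asymptotic behaviour of $\Psi$ at $\pm\infty$ is precisely what Melrose--Taylor establish for the Fock-type profile (Airy-convolution representation giving the polynomial expansion for $\tau\to +\infty$ and Schwartz decay for $\tau\to -\infty$).

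Next I would verify membership in $S^{1}_{2/3,1/3}(\partial K\times(0,\infty))$. For each term $a_{p,q}$, tangential derivatives either fall on $b_{p,q}$ (bounded), on $k^{1/3}Z$ inside $\Psi^{(p)}$ (each such differentiation gains a factor $k^{1/3}$, matching the $\delta=1/3$ exponent), or differentiate in $k$ (each such derivative costs a factor $k^{-1}$ from the prefactor, but may also bring $k^{-2/3}Z$ from inside $\Psi^{(p)}$, matching the $\rho=2/3$ exponent). Uniform bounds on $\Psi^{(p)}$ and all its derivatives on $\mathbb{R}$ follow from the two asymptotic regimes: on $\{Z>0\}$ one uses the polynomial asymptotics of $\Psi$, and on $\{Z<0\}$ the Schwartz decay. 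Summing the resulting bounds over the admissible orders and using the asymptotic summation lemma (Borel-type construction of a symbol from its formal expansion) gives $\eta^{\rm slow}\in S^1_{2/3,1/3}$ with the stated principal order $1$.

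The main obstacle is the global character of the expansion: the Melrose--Taylor construction is performed locally near $\partial K^{SB}$, whereas the corollary asserts an expansion valid uniformly on all of $\partial K$. To handle this I would use a matching argument. On the deep illuminated region $\partial K^{IL}$, substitute the $\tau\to+\infty$ asymptotic expansion of $\Psi$ into the $a_{p,q}$ and check that the resulting series coincides with the standard WKB/Kirchhoff expansion for $\eta^{\rm slow}$ on $\partial K^{IL}$, up to terms that can be absorbed into the symbol remainder; this gives consistency and in particular shows the expansion extends smoothly into $\partial K^{IL}$. On the deep shadow $\partial K^{SR}$, the Schwartz decay of $\Psi$ at $-\infty$ together with $|Z(x)|$ bounded below away from $\partial K^{SB}$ forces each $a_{p,q}$ (and every derivative) to decay faster than any negative power of $k$, which is compatible with the well-known subpolynomial smallness of creeping-wave contributions in the deep shadow. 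Patching the local Melrose--Taylor parametrix with these outer expansions by a smooth partition of unity, and absorbing the (rapidly small) mismatch into the remainder, yields the required global symbol class statement and completes the proof.
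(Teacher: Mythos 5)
The paper does not prove this theorem: it is quoted verbatim as \cite[Corollary 2.1]{EcevitReitich09}, and no argument for it is given here. There is therefore no in-paper proof to compare your attempt against; what you have written is a sketch of the proof in the cited reference, and I will evaluate it on that basis.

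Your overall strategy --- Melrose--Taylor parametrix near the shadow boundary, phase extraction, matching with the outer WKB and deep-shadow regimes --- is indeed the route taken in Ecevit--Reitich. However, there are two concrete gaps. First, the premise that the Melrose--Taylor construction directly furnishes a boundary representation $u^{\rm tot}=e^{ik\alpha\cdot x}A(x,k)$ with $A\sim\sum_{p,q}\beta_{p,q}(x)\,k^{-p/3-q}\,\Psi^{(p)}(k^{1/3}Z(x))$ is too naive: the parametrix is an operator-level statement built from Fourier--Airy integral operators, and on the boundary the total field vanishes (Dirichlet condition), so the quantity of interest is precisely the \emph{normal derivative}, whose symbol must be extracted by an Airy/stationary-phase computation, not by differentiating a product. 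Related to this, your power accounting does not close as stated: the term $ik(\alpha\cdot\nu)A$ has leading size $k^{1}$, while $\partial_\nu A$ brings $k^{1/3}$, and neither "relabelling indices" nor collecting powers of $k$ produces the stated $k^{2/3}$ prefactor without invoking the first-order vanishing of $\alpha\cdot\nu$ at $\partial K^{SB}$, its compatibility with $Z$, and the Airy functional relation between $\Psi$ and $\Psi'$; this is the nontrivial heart of the computation and is missing. Second, the matching argument is presented only as a consistency check ("the series coincides with the WKB expansion", "compatible with the subpolynomial smallness"); to obtain the global membership $\eta^{\rm slow}\in S^{1}_{2/3,1/3}(\partial K\times(0,\infty))$ one must produce uniform remainder estimates on $\partial K^{IL}$ and $\partial K^{SR}$, which in the cited work rest on genuine parametrix error bounds, not on observed agreement of formal series. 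So: right strategy, but the two load-bearing technical steps are asserted rather than carried out.
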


\begin{figure}[htbp]
\begin{center}
\hbox{
	\subfigure[Regions on the unit circle]
		{\includegraphics*[height=3.65cm, trim={2.5cm 0.7cm 5.5cm 1.4cm},scale=0.3,clip]{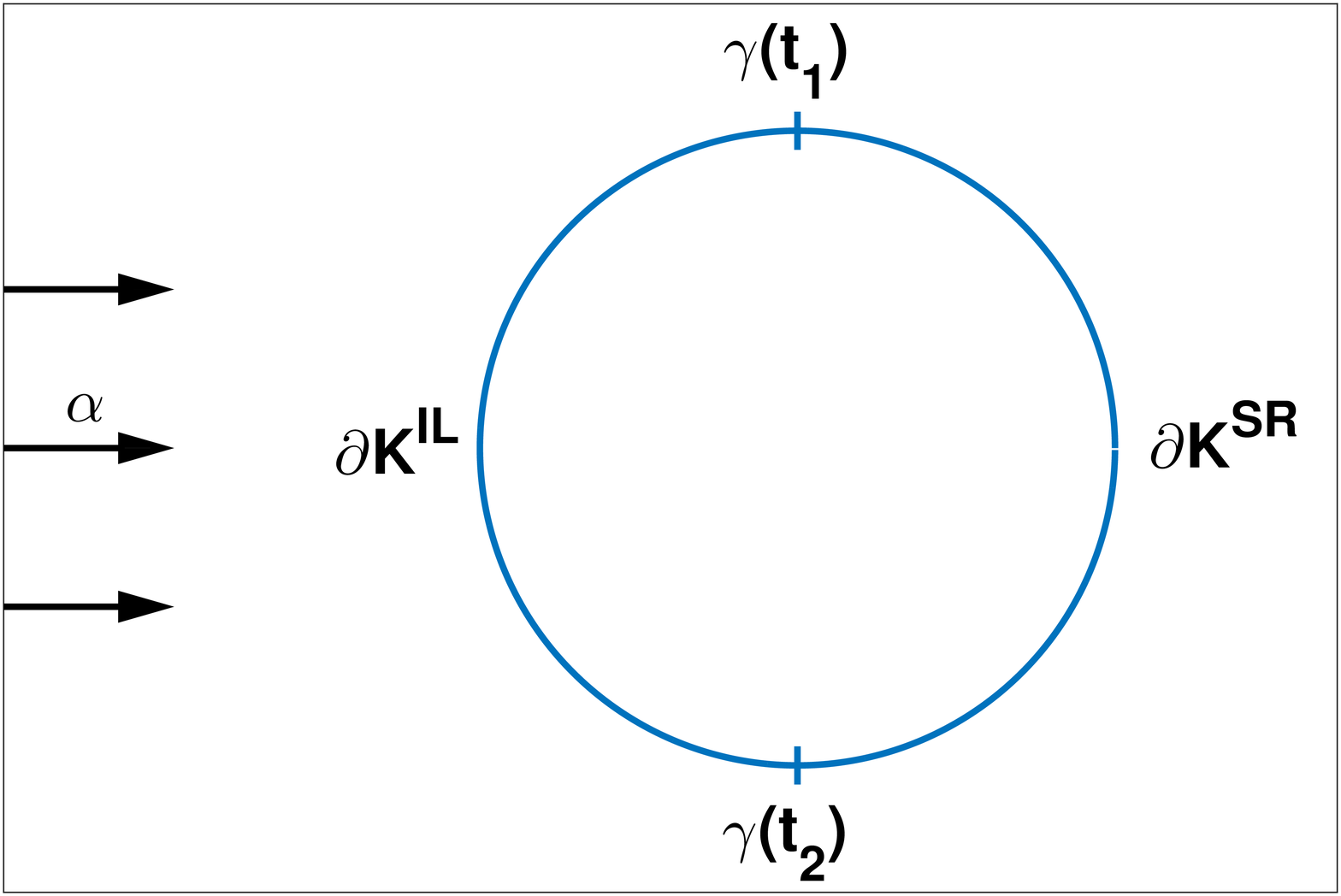}}
	\subfigure[Boundary layers generated]
		{\includegraphics*[height=3.65cm, trim={2.5cm 0.7cm 5.5cm 1.4cm},scale=0.3,clip]{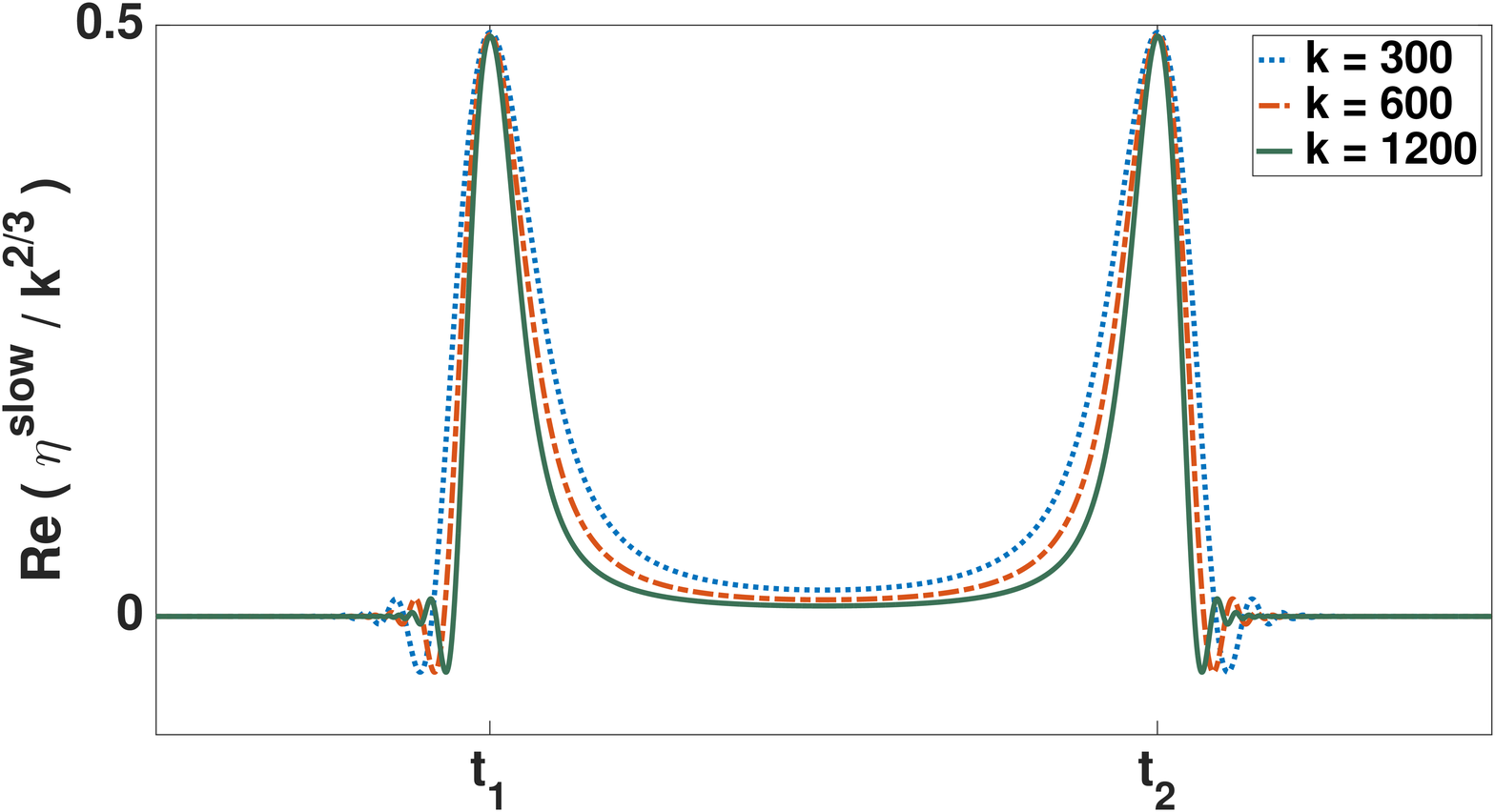}}
}
\end{center}
\caption{The unit circle illuminated from the left and the associated boundary layers of the solutions.}
\label{fig:RegionsLayers}
\end{figure}

Theorem~\ref{thm:ecevit} clearly displays the challenges associated with the
construction of approximation spaces adapted to the asymptotic behavior of
$\eta^{\rm slow}$ as it shows that while $\eta^{\rm slow}$ admits a classical
asymptotic expansion in the illuminated region and rapidly decays in the
shadow region, it possesses boundary layers in the $\mathcal{O}(k^{-1/3})$
neighborhoods of the shadow boundaries (see Fig.~\ref{fig:RegionsLayers}).
We overcome this difficulty by constructing approximation spaces improving
upon our approach in \cite{EcevitOzen16}, and better adapted to the changes in
frequency through use of a wavenumber dependent change of variables that
resolves the aforementioned boundary layers and that provides a smooth
transition from the shadow boundaries into the illuminated and shadow regions.
The resulting schemes, when compared with our algorithms in \cite{EcevitOzen16},
are easier to implement since the Galerkin spaces are represented as the
direct sum of a \emph{fixed} number approximation spaces rather than a number of those
that should \emph{increase} in proportion to $\log k$ as in \cite{EcevitOzen16}. Moreover, as we
explain, they display better approximation properties from a theoretical perspective
since they provide savings on the order of $\sqrt{\log k}$ to attain the same
accuracy. Perhaps more importantly, they yield significantly superior accuracy in the
shadow region as depicted through the numerical tests.

To describe our approximation spaces, we choose $\gamma$ to be the $L-$periodic
arc length parameterization of $\partial K$ in the counterclockwise orientation
with $\alpha \cdot \nu \left( \gamma(0) \right) = 1$. This ensures that if
$0 < t_{1} < t_{2} < L$ are the points corresponding to the \emph{shadow
boundaries}
\[
	\gamma \left( \left\{ t_1,t_2 \right\} \right)
	= \partial K^{SB},
\]
then the \emph{illuminated} and \emph{shadow regions} are given by (see
Fig.~\ref{fig:RegionsLayers}(a))
\begin{align*}
	\gamma \left( \left( t_1,t_2 \right) \right)
	= \partial K^{IL}
	\qquad
	\text{and}
	\qquad
	\gamma \left( \left( t_2,t_1+L \right) \right)
	= \partial K^{SR}.
\end{align*}

For $k >1$, we introduce two types of approximation spaces confined to the regions
depicted in Figure~\ref{fig:Regions}(a)-(b). In both cases, we define the \emph{illuminated
transition} and \emph{shadow transition} intervals as
\begin{align*}
	\itone & = [t_1 + \xi_1 k^{-1/3}, t_1 + \xi_1'] = [a_1,b_1], \\
	\ittwo & = [t_2 - \xi_2' , t_2 - \xi_2 k^{-1/3} ] = [a_2,b_2], \\
	\stone & = [t_1 - \zeta_1', t_1 - \zeta_1 k^{-1/3} ] = [a_3,b_3], \\
	\sttwo & = [t_2 + \zeta_2 k^{-1/3}, t_2 + \zeta_2'] = [a_4,b_4],
\end{align*}
and the \emph{shadow boundary} intervals as
\begin{align*}
	\sbone & = [t_1 - \zeta_1 k^{-1/3}, t_1 + \xi_1 k^{-1/3}] = [a_5,b_5], \\
	\sbtwo & = [t_2 - \xi_2 k^{-1/3}, t_2 + \zeta_2 k^{-1/3} ] = [a_6,b_6],
\end{align*}
where the parameters $\xi_j, \xi_j', \zeta_j,\zeta_j'>0$ ($j=1,2$) are chosen so that
\[
	t_1 + \xi_1
	\le t_1 + \xi_1'
	\overset{(A)}{\le} t_2 - \xi_2'
	\le t_2 - \xi_2,
\]
and
\[
	t_2 + \zeta_2
	\le t_2 + \zeta_2'
	\overset{(B)}{\le} L + t_1 - \zeta_1'
	\le L + t_1 - \zeta_1.
\]
\begin{figure}[hptb]
\begin{center}
\hbox{
	\hspace{0.5cm}
	\subfigure[Equalities in (A) and (B)]
	{\includegraphics*[height=1.45in,viewport=210 10 1575 790,clip]{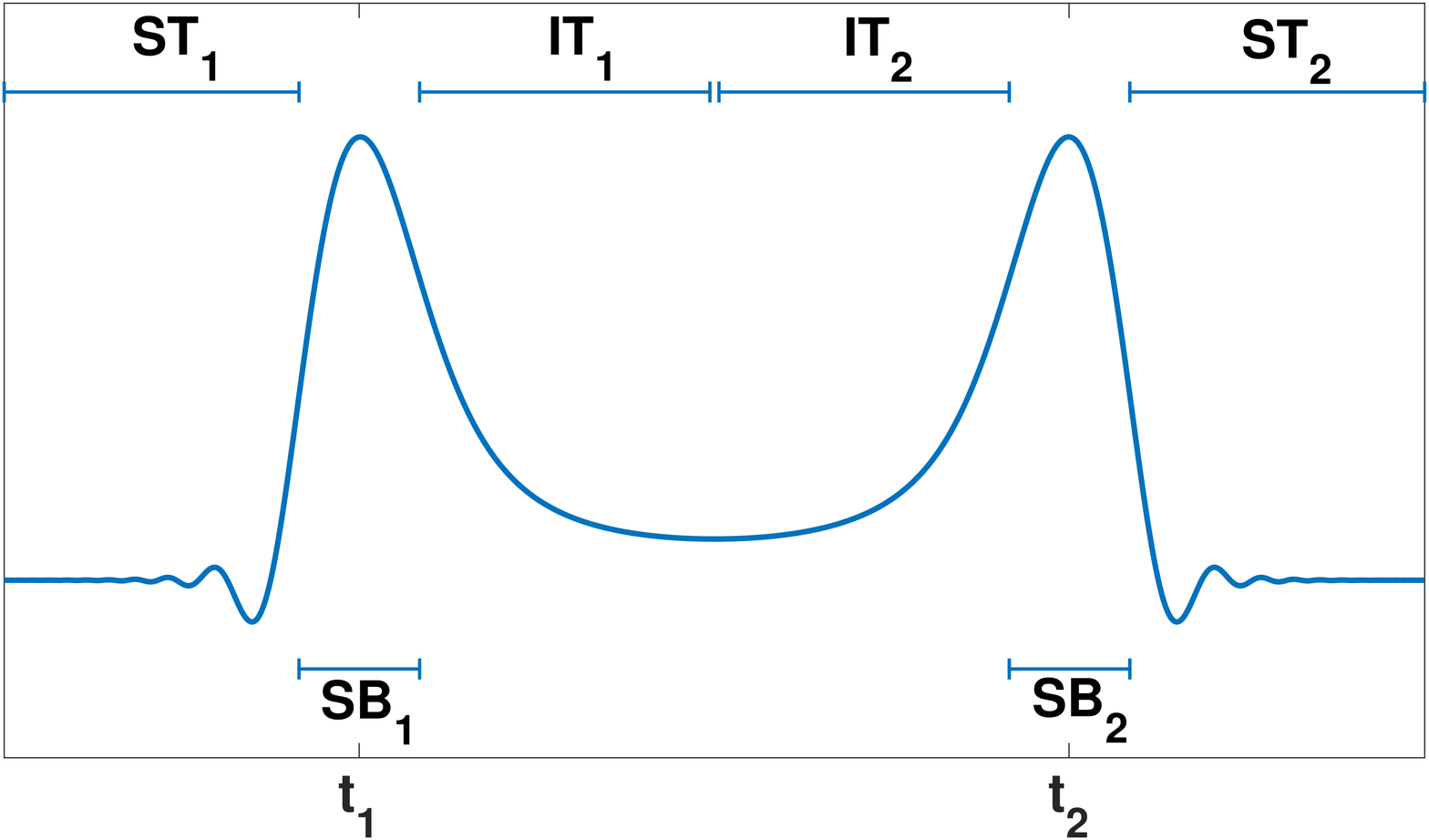}}
	\hspace{1cm}
	\subfigure[Strict inequalities in (A) and (B)]
	{\includegraphics*[height=1.45in,viewport=210 10 1575 790,clip]{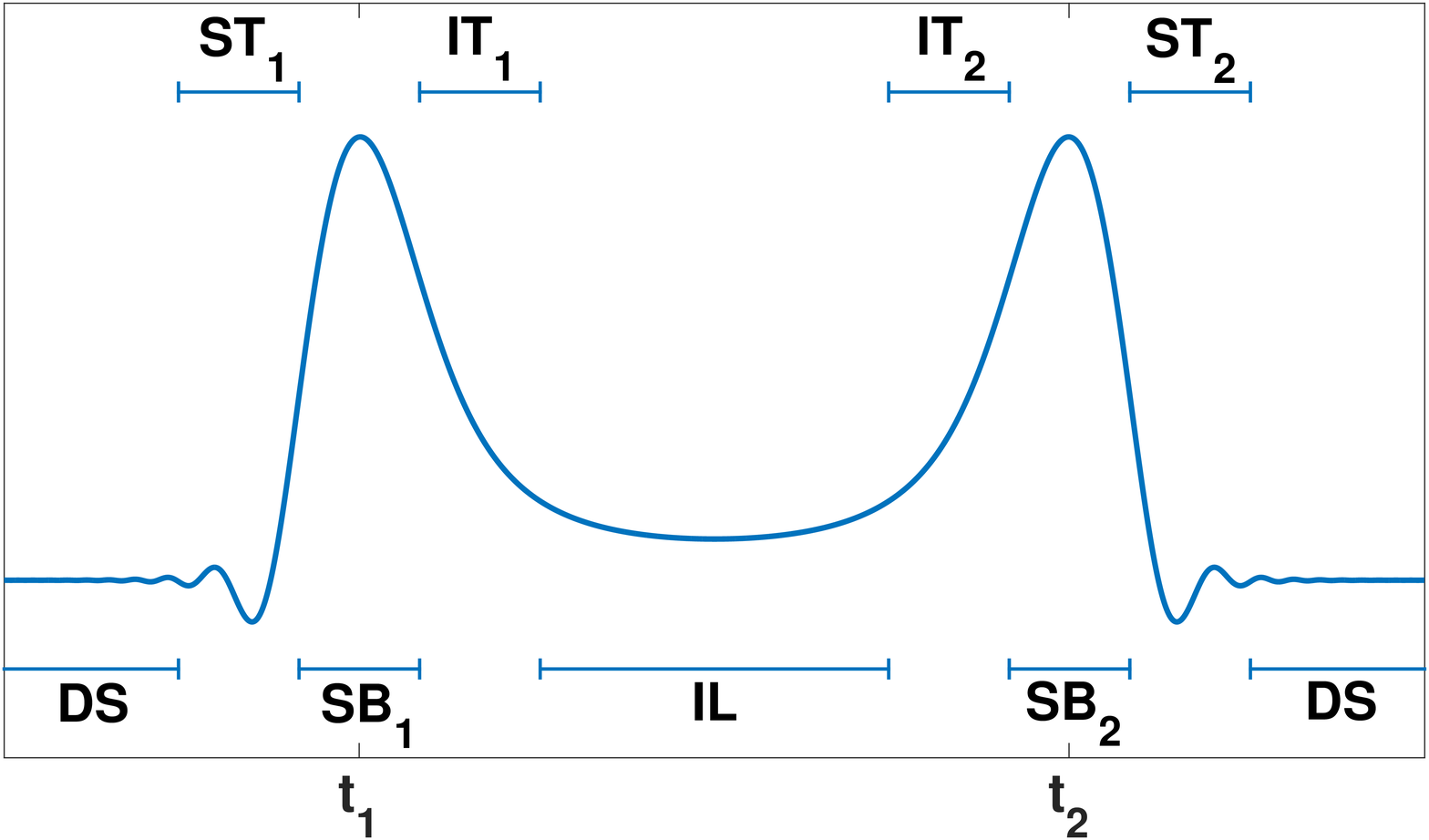}}
}
\end{center}
\caption{Regions on the boundary of the unit circle.}
\label{fig:Regions}
\end{figure}

Note specifically that the regions in Figure~\ref{fig:Regions}(a) correspond to equalities
in $(A)$ and $(B)$, and those in Figure~\ref{fig:Regions}(b) to strict inequalities. 
In the latter case, we define the \emph{illuminated} and \emph{deep shadow} intervals as
\begin{align*}
	\il & = [t_1 + \xi_1', t_2 - \xi_2'] = [a_7,b_7], \\
	\ds & = [	t_2 + \zeta_2', L + t_1 - \zeta_1'] = [a_8,b_8].
\end{align*}

With these choices, given $\mathbf{d} = \left( d_1, \ldots, d_J \right) \in \mathbb{Z}_+^{J}$
(with $J = 6$ and $J = 8$ for Figure~\ref{fig:Regions}(a) and (b) respectively), we define
our $(|\mathbf{d}|+J)-$dimensional \emph{Galerkin approximation spaces based on algebraic
polynomials and frequency dependent changes of variables} as
\begin{align*}
	\mathcal{A}_{\mathbf{d}}^{\mathcal{C}}
	= \bigoplus_{j=1}^{J}
	\charfunc_{\mathcal{I}_j} \ e^{i k \, \alpha \cdot \gamma} \ \mathcal{A}_{d_j}^{\mathcal{C}}.
\end{align*}
Here $\charfunc_{\mathcal{I}_j}$ is the characteristic function of $\mathcal{I}_j = [a_j,b_j]$, and
\begin{align*}
	\mathcal{A}_{d_j}^{\mathcal{C}} =
	\left\{
		\begin{array}{cl}
			\Pol_{d_j} \circ \phi ^{-1} , & \mbox{if } \mathcal{I}_j \mbox{ is a transition region}, \\ [0.5 em]
			\Pol_{d_j} , & \mbox{otherwise},
		\end{array}
	\right.	
\end{align*}
where $\Pol_{d_j}$ is the vector space of algebraic polynomials of degree at most $d_j$,
and $\phi$ is the change of variables on the transition intervals given explicitly as
\begin{align*}
	\phi \left( s \right)  = \left\{
		\begin{array}{ll} 
			t_1 + \varphi \left( s \right) k^{\psi \left( s \right)}, & s \in \itone,\\ [0.3 em]
			t_2 - \varphi \left( s \right) k^{\psi \left( s \right)}, & s \in \ittwo,\\ [0.3 em]
			t_1 - \varphi \left( s \right) k^{\psi \left( s \right)}, & s \in \stone,\\ [0.3 em]
			t_2 + \varphi \left( s \right) k^{\psi \left( s \right)}, & s \in \sttwo, 
		\end{array}
	\right .
\end{align*}
wherein $\varphi$ is the affine map
\begin{align*}
	\varphi (s)  = \left\{
		\begin{array}{ll}
			\xi_1 + \left( \xi_1'-\xi_1 \right) \dfrac{s-a_1}{b_1-a_1}, & s \in \itone,\\ [1 em]
		 	\xi_2' + \left( \xi_2-\xi_2' \right) \dfrac{s-a_2}{b_2-a_2}, & s \in \ittwo,\\ [1 em]
		 	\zeta_1' + \left( \zeta_1-\zeta_1' \right) \dfrac{s-a_3}{b_3-a_3}, & s \in \stone,\\ [1 em]
		 	\zeta_2 + \left( \zeta_2'-\zeta_2 \right) \dfrac{s-a_4}{b_4-a_4}, & s \in \sttwo,
		\end{array} 
	\right .
\end{align*}	 
and $\psi$ is the linear map
\begin{align*}
	\psi (s)  = -\dfrac{1}{3}
	\left\{
		\begin{array}{ll} 
			\dfrac{b_1-s}{b_1-a_1}, & s \in \itone,\\ [1 em]
		 	\dfrac{s-a_2}{b_2-a_2}, & s \in \ittwo,\\ [1 em]
		 	\dfrac{s-a_3}{b_3-a_3}, & s \in \stone,\\ [1 em]
		 	\dfrac{b_4-s}{b_4-a_4}, & s \in \sttwo.
		\end{array} 
	\right .
\end{align*}
The change of variables $\phi$ is constructed so that,
for $j=1,2,3,4$, the map $\phi : [a_j,b_j] \to [a_j,b_j]$ is an orientation
preserving diffeomorphism and the exponent $\psi$ of $k$ increases linearly from $-1/3$
to $0$ as we move away from the shadow boundaries into the illuminated or shadow regions.
While on the one hand this choice guarantees that the degrees of freedom assigned to the
$\mathcal{O}(k^{-1/3})$ neighborhoods of the shadow boundaries remains fixed with increasing
wave-number $k$, and on the other hand it also ensures that the approximation spaces are
perfectly adapted to the asymptotic behaviour of the solution.

\begin{remark}
Note that, by construction, $\gamma ( \cup_{j=1}^{J} \mathcal{I}_j ) = \partial K$ and the
intervals $\mathcal{I}_j$ intersect either trivially or only at an end point. Therefore we can clearly
identify $L^{2}\left( \partial K \right)$ and $L^2 ( \cup_{j=1}^{J} \mathcal{I}_j )$ through the
($L-$periodic arc length) parametrization $\gamma$. This will be the convention we shall
follow without any further reference in the rest of the paper.
\end{remark}

With the above definitions, the Galerkin formulation of problem \eqref{eq:CFIE} is
equivalent to finding the unique $\hat{\eta} \in \mathcal{A}_{\mathbf{d}}^{\mathcal{C}}$ such that
\begin{equation} \label{eq:galerkin-formulation-chvar}
	B_k(\hat{\mu},\hat{\eta})
	= F_k(\hat{\mu}),
	\quad
	\text{for all } \hat{\mu} \in \mathcal{A}_{\mathbf{d}}^{\mathcal{C}}.
\end{equation}
While the following theorem constitutes the main result of the paper and provides the
approximation properties of the solution of equation \eqref{eq:galerkin-formulation-chvar},
its proof is deferred to the next section. Hereafter we write $A \lesssim_{a,b,\ldots}B$ to
mean $0 \le A \le c B$ for a positive constant $c$ that depends only on $a,b,\ldots$.

\begin{theorem} \label{thm:changeofvariables}
Given $k_0 >1$ and $k \ge k_0$, suppose that the sesquilinear form $B_k$ in (\ref{eq:sesqui})
associated with the integral operator $\mathcal{R}_k$ in (\ref{eq:CFIE}) is continuous with a
continuity constant $C_k$ and coercive with a coercivity constant $c_k$. Then, for all
$n_{j} \in \{ 0,\ldots,d_{j} + 1 \}$ $(j=1,\ldots,J)$, we have
\[
	\Vert \eta - \hat{\eta} \Vert_{L^2(\partial K)}
	\lesssim_{n_1,\ldots,n_J,k_0}
	\dfrac{C_k}{c_k} \, k
	\sum_{j=1}^{J}
	\dfrac{E(k,j)}{\left( d_j \right)^{n_j}}
\]
for the Galerkin solution $\hat{\eta}$ to (\ref{eq:galerkin-formulation-chvar}) where
\[
	E(k,j) =
	\left\{
		\begin{array}{ll}
			\left( \log k \right)^{n_j+1/2}, & j =1,2,3,4 \ (\text{transition regions}), \\[0.5em]
			k^{-1/6},& j =5,6 \, \qquad (\text{shadow boundaries});
		\end{array}
	\right. 
\]
if $J =8$, then
\[
	E(k,j) = 1,
	\qquad
	j = 7,8,
	\quad
	(\text{illuminated and shadow regions}).
\]

\end{theorem}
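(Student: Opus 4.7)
The plan begins by invoking C\'{e}a's lemma \eqref{eq:Cea}, which reduces the error estimate to an upper bound on $\inf_{\hat\mu \in \mathcal{A}_{\mathbf{d}}^{\mathcal{C}}} \|\eta - \hat\mu\|_{L^2(\partial K)}$. Since $\partial K = \gamma(\cup_j \mathcal{I}_j)$ with only endpoint overlaps and $\mathcal{A}_{\mathbf{d}}^{\mathcal{C}}$ is a direct sum, I would localize by taking $\hat\mu = \sum_j \chi_{\mathcal{I}_j} e^{ik\alpha\cdot\gamma} f_j$ with $f_j \in \mathcal{A}_{d_j}^{\mathcal{C}}$ chosen independently on each $\mathcal{I}_j$. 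Combined with the phase extraction $\eta = e^{ik\alpha\cdot\gamma} \eta^{\rm slow}$ and $|e^{ik\alpha\cdot\gamma}| = 1$, this yields
\[
\|\eta - \hat\mu\|_{L^2(\partial K)}^2 = \sum_{j=1}^J \|\eta^{\rm slow} - f_j\|_{L^2(\mathcal{I}_j)}^2,
\]
so the task reduces to a one-dimensional best-approximation bound on each $\mathcal{I}_j$.

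For a shadow-boundary interval (length $\sim k^{-1/3}$), I would invoke the H\"{o}rmander class $S^1_{2/3,1/3}$ membership from Theorem~\ref{thm:ecevit} to extract $|\partial_x^n \eta^{\rm slow}| \lesssim k^{1+n/3}$, and then combine Jackson's inequality in $L^\infty$ with the bound $\|\cdot\|_{L^2(\mathcal{I}_j)} \leq |\mathcal{I}_j|^{1/2}\|\cdot\|_{L^\infty(\mathcal{I}_j)}$ to produce $E(k,j) = k^{-1/6}$. For the illuminated and deep-shadow intervals (length $\sim 1$, bounded away from the shadow boundaries), the asymptotic properties of $\Psi$ (linear at $+\infty$, Schwartz at $-\infty$) together with the expansion in Theorem~\ref{thm:ecevit} yield the refined bound $|\partial_x^n \eta^{\rm slow}| \lesssim k$ with constant depending only on $n$, and Jackson yields $E(k,j) = 1$ directly.

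The main effort will be devoted to the four transition intervals. On such an interval I would take $f_j = p_j \circ \phi^{-1}$ with $p_j \in \Pol_{d_j}$, set $\tilde\eta_j = \eta^{\rm slow} \circ \phi$, and apply the substitution $t = \phi(s)$ to obtain
\[
\|\eta^{\rm slow} - p_j \circ \phi^{-1}\|_{L^2(\mathcal{I}_j)}^2
= \int_{\mathcal{I}_j} |\tilde\eta_j(s) - p_j(s)|^2 |\phi'(s)|\, ds
\leq \|\phi'\|_{L^\infty(\mathcal{I}_j)}\,\|\tilde\eta_j - p_j\|_{L^2(\mathcal{I}_j)}^2.
\]
Differentiating $\phi(s) = t_j \pm \varphi(s) k^{\psi(s)}$ and using that $\varphi,\psi$ are affine with $\psi(s) \in [-1/3, 0]$ yields $|\phi^{(m)}(s)| \lesssim k^{\psi(s)} (\log k)^m$ and, in particular, $\|\phi'\|_\infty \lesssim \log k$. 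For the derivatives of $\tilde\eta_j$ I would combine Fa\`{a} di Bruno with a refined pointwise bound extracted from the leading asymptotic $a_{0,0}(x,k) = k^{2/3} b_{0,0}(x) \Psi(k^{1/3} Z(x))$ and the asymptotics of $\Psi$, of the form
\[
|\partial_x^m \eta^{\rm slow}(x,k)| \lesssim k \, \left(\dfrac{k^{1/3}}{1 + k^{1/3}|Z(x)|}\right)^m.
\]
Since $Z$ vanishes to first order at $t_j$, $|Z(\phi(s))| \sim k^{\psi(s)}$ on the transition interval, so the resulting factor $k^{-m\psi(s)}$ cancels exactly the $k^{m\psi(s)}$ coming from the Bell-polynomial estimate $|B_{n,m}(\phi',\ldots,\phi^{(n-m+1)})| \lesssim k^{m\psi(s)}(\log k)^n$, producing $|\tilde\eta_j^{(n)}(s)| \lesssim k(\log k)^n$ uniformly in $s$. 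Jackson's inequality then gives $\|\tilde\eta_j - p_j\|_{L^2(\mathcal{I}_j)} \lesssim k (\log k)^{n_j} d_j^{-n_j}$, and multiplication by $\|\phi'\|_\infty^{1/2} \lesssim (\log k)^{1/2}$ yields the promised $E(k,j) = (\log k)^{n_j+1/2}$.

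The hardest step will be establishing the refined pointwise bound on $\partial_x^m \eta^{\rm slow}$ and verifying the cancellation of exponential powers of $k$ in the Fa\`{a} di Bruno estimate. Using only the H\"{o}rmander bound $|\partial_x^m \eta^{\rm slow}| \lesssim k^{1+m/3}$ would leave a residual factor $k^{m(1/3+\psi(s))}$, which is benign only at $\psi=-1/3$ (the shadow-boundary endpoint) and catastrophic at $\psi=0$. It is precisely the design of $\psi$ as a linear interpolation between these two values, together with the decay of $\Psi^{(m)}$ for $m \geq 2$ in its asymptotic regime, that makes the interior and boundary-layer estimates merge smoothly and forces cancellation throughout $\mathcal{I}_j$. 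Extending the analysis beyond the leading summand $a_{0,0}$ to all terms $a_{p,q}$ of the H\"{o}rmander expansion is then routine, since the prefactors $k^{2/3-2p/3-q}$ only improve the estimate.
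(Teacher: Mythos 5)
Your proposal follows essentially the same route as the paper's proof of Theorem~\ref{thm:changeofvariables}: C\'{e}a's lemma to reduce to best approximation, interval-wise localization with the unimodular weight $e^{ik\alpha\cdot\gamma}$ factored out, transfer via the change of variables $\phi$ to best approximation of $\eta^{\rm slow}\circ\phi$, the derivative bound $|D^{\ell}_s\phi|\lesssim(\log k)^{\ell}k^{\psi}$, Fa\`{a} di Bruno, and --- crucially --- the cancellation of the $k^{m\psi}$ factor coming from the Bell polynomials against the $k^{-m\psi}$ decay of the pointwise bound on $D^m\eta^{\rm slow}$, exploiting $|Z(\phi(s))|\gtrsim k^{\psi(s)}$ on the transition intervals. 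That cancellation is exactly the content of Proposition~\ref{prop:dercomp}, where it appears as the boundedness of $k^{\psi}/|\omega(\phi)|$. Your use of Jackson's $L^\infty$ inequality plus the $L^\infty$-to-$L^2$ embedding on an interval of length $\ell$ yields $\ell^{n+1/2}d^{-n}\|D^n f\|_\infty$, which is equivalent (up to $n$-dependent constants) to the weighted-seminorm form of Theorem~\ref{thm:pae} used in the paper; the Pythagoras versus triangle-inequality distinction in splitting $L^2(\partial K)$ is immaterial.

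The one genuine loose end is your plan to establish the refined derivative bound $|D^m_s\eta^{\rm slow}|\lesssim k\,(k^{-1/3}+|Z|)^{-m}$ ``from the leading asymptotic $a_{0,0}$ and the asymptotics of $\Psi$.'' An asymptotic expansion in the sense of Theorem~\ref{thm:ecevit} controls remainders after partial sums; it does not, by itself, give uniform pointwise bounds on all derivatives of $\eta^{\rm slow}$ --- one must also track the symbol estimates for the remainder, and declaring the extension from $a_{0,0}$ to all $a_{p,q}$ ``routine'' glosses over precisely where the work lies. The paper imports this estimate wholesale as Theorem~\ref{thm:etaslowder}, citing \cite{DominguezEtAl07} where the derivation is actually carried out, and then upgrades ``$k$ sufficiently large'' to ``$k\ge k_0$'' by a continuity argument. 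You should do likewise rather than re-derive it. (Your stated bound is also slightly weaker than the paper's near the shadow boundaries, giving $k^{1+m/3}$ rather than $\max\{k,k^{(m+2)/3}\}$, but the $k^{-1/3}$ interval width compensates and still produces $E(k,j)=k^{-1/6}$, so this looseness is harmless.)
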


Since $\eta$ has the same asymptotic order with $k$ as $k \to \infty$ (see e.g. \cite{MelroseTaylor85}),
if we assign the same polynomial degree to each interval $\mathcal{I}_j$, we obtain the following estimate for
the relative error.

\begin{corollary} \label{cor:chvar}
Under the assumptions of Theorem \ref{thm:changeofvariables}, if the same polynomial degree
$d = d_1 = \ldots = d_{J}$ is used on each interval, then for all $n \in \{0,\ldots, d+1\}$, there holds
\begin{equation} \label{eq:newalgerror}
	\dfrac{\Vert \eta - \hat{\eta} \Vert_{_{L^2(\partial K)}}}{\Vert \eta \Vert_{_{L^2(\partial K)}}}
	\lesssim_{n,k_0}
	\dfrac{C_k}{c_k} \dfrac{\left( \log k \right)^{n+1/2}}{d^n} 
\end{equation}
for the Galerkin solution $\hat{\eta}$ to (\ref{eq:galerkin-formulation-chvar}).
\end{corollary}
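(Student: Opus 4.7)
The plan is to deduce Corollary \ref{cor:chvar} as a direct consequence of Theorem \ref{thm:changeofvariables}, the only substantive additional input being a lower bound on $\Vert \eta \Vert_{L^{2}(\partial K)}$ coming from the Melrose--Taylor asymptotics.

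First I would specialise Theorem \ref{thm:changeofvariables} to the setting of the corollary by setting $d_{1} = \ldots = d_{J} = d$ and $n_{1} = \ldots = n_{J} = n$, which yields
\[
	\Vert \eta - \hat{\eta} \Vert_{L^{2}(\partial K)}
	\lesssim_{n,k_{0}}
	\dfrac{C_{k}}{c_{k}} \, \dfrac{k}{d^{n}}
	\sum_{j=1}^{J} E(k,j).
\]
Next I would dominate the sum on the right by its largest term. Among the contributions $E(k,j)$ listed in Theorem \ref{thm:changeofvariables}, the transition regions contribute $(\log k)^{n+1/2}$, the shadow boundaries contribute $k^{-1/6}$, and (when $J=8$) the illuminated and deep shadow regions contribute $1$. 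For $k \ge k_{0} > 1$ one has $k^{-1/6} \le k_{0}^{-1/6}$ and, by enlarging the hidden constant if necessary, also $1 \lesssim_{n,k_{0}} (\log k)^{n+1/2}$. Hence
\[
	\sum_{j=1}^{J} E(k,j)
	\lesssim_{n,k_{0}}
	(\log k)^{n+1/2},
\]
so that
\[
	\Vert \eta - \hat{\eta} \Vert_{L^{2}(\partial K)}
	\lesssim_{n,k_{0}}
	\dfrac{C_{k}}{c_{k}} \, \dfrac{k \, (\log k)^{n+1/2}}{d^{n}}.
\]

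Finally, to pass from the absolute to the relative error I would invoke the fact recalled by the authors (see \cite{MelroseTaylor85}, as cited in the paragraph preceding the corollary) that the normal derivative of the total field satisfies $\Vert \eta \Vert_{L^{2}(\partial K)} \asymp k$ as $k \to \infty$; equivalently, there exists a positive constant, depending only on $K$, $\alpha$, and $k_{0}$, such that $k \lesssim_{k_{0}} \Vert \eta \Vert_{L^{2}(\partial K)}$ for all $k \ge k_{0}$. Dividing the above estimate by $\Vert \eta \Vert_{L^{2}(\partial K)}$ then cancels the factor of $k$ in the numerator and gives
\[
	\dfrac{\Vert \eta - \hat{\eta} \Vert_{L^{2}(\partial K)}}{\Vert \eta \Vert_{L^{2}(\partial K)}}
	\lesssim_{n,k_{0}}
	\dfrac{C_{k}}{c_{k}} \, \dfrac{(\log k)^{n+1/2}}{d^{n}},
\]
which is exactly \eqref{eq:newalgerror}.

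The only point that requires any care is ensuring that all the bookkeeping constants are captured cleanly: the absorption of the $k^{-1/6}$ and constant terms $E(k,j)$ into the dominant $(\log k)^{n+1/2}$ term must be done uniformly in $k \ge k_{0}$, and the lower bound $\Vert \eta \Vert_{L^{2}(\partial K)} \gtrsim k$ must be cited as a genuine input (rather than derived here), since its justification lies outside the scope of Theorem \ref{thm:changeofvariables} itself. Beyond these two verifications, the argument is purely a substitution into the main theorem.
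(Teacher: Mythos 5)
Your proposal is correct and coincides with the argument the paper intends (the paper does not write out a separate proof; it simply states the corollary after the sentence ``Since $\eta$ has the same asymptotic order with $k$ as $k\to\infty$ (see e.g.\ \cite{MelroseTaylor85})\ldots''). You have spelled out exactly the two implicit steps: (i) set $d_j=d$, $n_j=n$ in Theorem~\ref{thm:changeofvariables} and absorb the shadow-boundary term $k^{-1/6}$ and the constant $1$ into the dominant $(\log k)^{n+1/2}$ using $k\ge k_0>1$, so $\log k\ge\log k_0>0$; and (ii) divide by $\Vert\eta\Vert_{L^2(\partial K)}\gtrsim_{k_0} k$, which is the Melrose--Taylor input. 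The only thing worth flagging, which you correctly identified, is that the lower bound $\Vert\eta\Vert\gtrsim k$ \emph{uniformly on} $[k_0,\infty)$ is not literally the same statement as the asymptotic $\Vert\eta\Vert\asymp k$ as $k\to\infty$; closing that gap requires the additional observation that $\Vert\eta\Vert$ is continuous in $k$ and strictly positive on the compact remainder $[k_0,K_1]$, an argument the paper also leaves tacit.
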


Theorem~\ref{thm:changeofvariables} and Corollary \ref{cor:chvar} display the improved convergence characteristics
of the Galerkin approximation spaces based on changes of variables when compared with our approach in \cite{EcevitOzen16}
wherein we have treated the transition regions in a different way (specifically, rather than utilizing a change of variables,
we have divided the transition regions into an optimal number of subregions depending on the underlying frequency and
used approximation spaces in the form of the plane-wave weighted by polynomials). Moreover, as will be depicted in the
numerical tests, the Galerkin approximation spaces based on changes of variables display a significant improvement
over those in \cite{EcevitOzen16} in terms of the accuracy of solutions in the shadow regions.

In order to allow for a direct comparison, here we present a more flexible version of our algorithm in \cite{EcevitOzen16},
that is better suited for different geometries, together with the associated convergence results. To this end, given
$m \in \N$, $0 \leq \epsilon_m < \epsilon_{m-1} < \cdots < \epsilon_1 < 1/3$, and constants
$\xi_1, \xi_2, \zeta_1,\zeta_2 >0$ with $t_1-\xi_1 < t_2-\xi_2$ and $t_2 + \zeta_2 < L+ t_1-\zeta_1$,
we define the associated \emph{illuminated transition} and \emph{shadow transition} intervals as
\begin{align*}
	\itone & = [t_1 + \xi_1 k^{-1/3+\epsilon_{m}}, t_1 + \xi_1 k^{-1/3+\epsilon_1}],
	\\
	\ittwo & = [t_2 - \xi_2 k^{-1/3+\epsilon_{1}}, t_2 - \xi_2 k^{-1/3+\epsilon_{m}}],
	\\
	\stone & = [t_1 - \zeta_1 k^{-1/3+\epsilon_1} , t_1 - \zeta_1 k^{-1/3+\epsilon_{m}}],
	\\
	\sttwo & = [t_2 +  \zeta_2 k^{-1/3+\epsilon_{m}} , t_2 + \zeta_2 k^{-1/3+\epsilon_{1}}],
\end{align*}
and, rather than utilizing a change of variables in the transition intervals, we divide each one into
$m-1$ subintervals by setting
\begin{align*}
	\itone^j & = [t_1 + \xi_1 k^{-1/3+\epsilon_{j+1}}, t_1 + \xi_1 k^{-1/3+\epsilon_j}],
	\\
	\ittwo^j & = [t_2 - \xi_2 k^{-1/3+\epsilon_{j}}, t_2 - \xi_2 k^{-1/3+\epsilon_{j+1}}],
	\\
	\stone^j & = [t_1 - \zeta_1 k^{-1/3+\epsilon_j} , t_1 - \zeta_1 k^{-1/3+\epsilon_{j+1}}],
	\\
	\sttwo^j & = [t_2 +  \zeta_2 k^{-1/3+\epsilon_{j+1}} , t_2 + \zeta_2 k^{-1/3+\epsilon_{j}}],
\end{align*}
for  $j=1,\ldots , m-1$. In addition to these $4m-4$ transition intervals, we further define
the \emph{shadow boundary} intervals as
\begin{align*}
	\sbone & = [t_1 - \zeta_1 k^{-1/3 +\epsilon_m}, t_1 + \xi_1 k^{-1/3+\epsilon_m}],
	\\
	\sbtwo & = [t_2 - \xi_1 k^{-1/3 +\epsilon_m}, t_2 + \zeta_2 k^{-1/3+\epsilon_m}],
\end{align*}
and the \emph{illuminated region} and \emph{shadow region} intervals as
\begin{align*}
	\il & = [t_1 + \xi_1k^{-1/3+\epsilon_1}, t_2 - \xi_2k^{-1/3+\epsilon_1}],
	\\
	\ds & = [t_2 + \zeta_2 k^{-1/3+\epsilon_1}, L+t_1-\zeta_1 k^{-1/3+\epsilon_1}].
\end{align*}
These give rise to a total of $4m$ intervals which we shall denote as $\mathcal{I}_j$ $(j=1,\ldots,4m)$.
Reasoning as before, we identify $L^{2} \left( \partial K \right)$ with $L^2 ( \cup_{j=1}^{4m} \mathcal{I}_j )$.
Given $\mathbf{d} = \left(d_1,\ldots,d_{4m} \right) \in \mathbb{Z}_+^{4m}$, we now define
the $( \left| \mathbf{d} \right| + 4m)-$dimensional \emph{Galerkin approximation space based
on algebraic polynomials} as
\begin{equation} \label{eq:GalerkinPoly}
	\mathcal{A}_\mathbf{d}
	= \bigoplus_{j=1}^{4m} \,
	\charfunc_{\mathcal{I}_j} \
	e^{i k \, \alpha \cdot \gamma} \
	\Pol_{d_j}.
\end{equation}
The associated Galerkin formulation of (\ref{eq:CFIE}) is to find the unique
$\hat{\eta} \in \mathcal{A}_{\mathbf{d}}$ such that
\begin{equation} \label{eq:galerkin-formulation}
	B_k(\hat{\mu},\hat{\eta})
	= F_k(\hat{\mu}),
	\quad
	\text{for all } \hat{\mu} \in \mathcal{A}_{\mathbf{d}}.
\end{equation}
Incidentally, the Galerkin approximation spaces defined by equation
\eqref{eq:GalerkinPoly} provide a more flexible version of those in \cite{EcevitOzen16}
since here we allow for different $\xi$ and $\zeta$ values rather than the 
values $\xi_1 = \xi_2$ and $\zeta_1=\zeta_2$ we used in \cite{EcevitOzen16}. This
clearly renders the new approximation spaces better adapted to different geometries.

\begin{theorem} \label{thm:ecevitozen}
Suppose that $k$ is sufficiently large and the sesquilinear form $B_k$ in (\ref{eq:sesqui}) associated with the integral
operator $\mathcal{R}_k$ in (\ref{eq:CFIE}) is continuous with a continuity constant $C_k$ and coercive with a coercivity
constant $c_k$. Then, for all $n_{j} \in \{ 0,\ldots,d_{j} + 1 \}$ $(j=1,\ldots,4m)$, we have
\[
	\Vert \eta - \hat{\eta} \Vert_{L^{2}(\partial K)}
	\lesssim_{n_{1},\ldots,n_{4m}}
	\dfrac{C_k}{c_k} \, k \,
	\sum_{j = 1}^{4m} \dfrac{1+E(k,j)}{\left( d_{j} \right)^{n_{j}}}
\]
for the Galerkin solution $\hat{\eta}$ to (\ref{eq:galerkin-formulation}) where
\[
	E(k,j) =
	\left\{
		\begin{array}{ll}
			k^{-(1+3\epsilon_{r+1})/2} \left( k^{(\epsilon_{r}-\epsilon_{r+1})/2} \right)^{n_{j}}\!\!\!\!\!,
			& j=1,\ldots,4m-4, \, \quad (\text{transition regions}),
			\\ [0.5 em]
			k^{-1/2} \left( k^{\epsilon_{m}} \right)^{n_{j}},
			& j=4m-3,4m-2, \ (\text{shadow boundaries}),
			\\ [0.5 em]
			k^{-(1+3\epsilon_{1})/2} \left( k^{(1/3-\epsilon_{1})/2} \right)^{n_{j}},
			& j=4m-1,4m, \qquad (\text{illuminated \& shadow reg.}).
		\end{array}
	\right.
\]
\end{theorem}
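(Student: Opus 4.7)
The plan is to essentially reuse the proof strategy from our earlier paper \cite{EcevitOzen16}, since the only change here is the relaxation $\xi_1 \neq \xi_2$, $\zeta_1 \neq \zeta_2$, which affects only the lengths of the individual intervals $\mathcal{I}_j$ and not the underlying asymptotic structure of $\eta^{\rm slow}$. Concretely I would proceed in three stages: first apply C\'ea's lemma~\eqref{eq:Cea} to reduce the Galerkin error to a best-approximation error over $\mathcal{A}_\mathbf{d}$; second, exploit the phase-extracted direct-sum structure of $\mathcal{A}_\mathbf{d}$ to rewrite this as a sum of polynomial-approximation errors for $\eta^{\rm slow}$ on each $\mathcal{I}_j$; and third, invoke the asymptotics of $\eta^{\rm slow}$ from Theorem~\ref{thm:ecevit} to bound each polynomial-approximation error by the stated $E(k,j)$.

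For the second stage, the intervals $\mathcal{I}_j$ pairwise intersect at most at endpoints, so the $L^2$-norm on $\partial K$ decouples into a sum over these intervals. Since every $\hat{\mu} \in \mathcal{A}_{\mathbf{d}}$ takes the form $\hat{\mu} = \sum_j \charfunc_{\mathcal{I}_j} e^{ik\alpha\cdot\gamma} p_j$ with $p_j \in \Pol_{d_j}$, and $\eta = e^{ik\alpha\cdot\gamma} \eta^{\rm slow}$, at once one has
\[
\inf_{\hat{\mu} \in \mathcal{A}_{\mathbf{d}}} \Vert \eta - \hat{\mu} \Vert^{2}_{L^2(\partial K)}
= \sum_{j=1}^{4m} \inf_{p_j \in \Pol_{d_j}} \Vert \eta^{\rm slow} - p_j \Vert^{2}_{L^2(\mathcal{I}_j)}.
\]
On each $\mathcal{I}_j$ I would then apply a Jackson-type polynomial approximation estimate of the form $\inf_{p \in \Pol_{d_j}} \Vert f - p \Vert_{L^2(\mathcal{I}_j)} \lesssim_{n_j} (\vert \mathcal{I}_j \vert / d_j)^{n_j} \Vert f^{(n_j)} \Vert_{L^2(\mathcal{I}_j)}$ with $f = \eta^{\rm slow}$, reducing everything to $L^2$ bounds on $\partial_s^{n_j} \eta^{\rm slow}$ over each $\mathcal{I}_j$.

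The bulk of the work lies in the third stage, which splits into three cases. On a transition subinterval $\itone^r$, which has length $\sim k^{-1/3+\epsilon_r}$ and lies at distance $\gtrsim k^{-1/3+\epsilon_{r+1}}$ from the shadow boundary so that $\tau = k^{1/3} Z(\gamma(s)) \gtrsim k^{\epsilon_{r+1}}$, I would combine the chain and Leibniz rules applied to $a_{p,q}(x,k) = k^{2/3-2p/3-q} b_{p,q}(x) \Psi^{(p)}(k^{1/3}Z(x))$ with the classical asymptotic expansion $\Psi(\tau) \sim \sum_{j \ge 0} c_j \tau^{1-3j}$ at $+\infty$: each $s$-derivative of the $\Psi$-factor introduces one power of $k^{1/3}$ and loses one power of $\tau$, so that the minimum of $\tau$ attained on $\mathcal{I}_j$ (namely $k^{\epsilon_{r+1}}$) controls the iterated derivative. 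Integrating the resulting pointwise bound over $\mathcal{I}_j$ and rearranging should yield $E(k,j) = k^{-(1+3\epsilon_{r+1})/2}(k^{(\epsilon_r-\epsilon_{r+1})/2})^{n_j}$, and an analogous computation applies on $\ittwo^r$, $\stone^r$, $\sttwo^r$. On the shadow boundary intervals $\sbone, \sbtwo$ of length $\sim k^{-1/3+\epsilon_m}$ the argument $\tau$ stays bounded so no cancellation from the $\Psi$-expansion is available; here I would fall back on the crude symbol-class estimate $\vert \partial_s^n \eta^{\rm slow} \vert \lesssim k^{1+n/3}$ implied by $\eta^{\rm slow} \in S^{1}_{2/3,1/3}$, which directly produces the shadow-boundary formula. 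On the illuminated interval $\il$ of length $O(1)$, the asymptotic expansion of $\Psi$ at $+\infty$ gives $\vert \partial_s^n \eta^{\rm slow} \vert \lesssim k$ away from its endpoints, while on the deep-shadow interval $\ds$ the Schwartz decay of $\Psi$ on the negative axis yields $\vert \partial_s^n \eta^{\rm slow} \vert \lesssim_N k^{-N}$ for any $N$ (the latter being absorbed into the ``$1$'' in $1+E(k,j)$).

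The principal obstacle I foresee is the bookkeeping on the transition subintervals: one has to correctly identify the minimum of $\tau = k^{1/3} Z$ attained on $\mathcal{I}_j$ as $k^{\epsilon_{r+1}}$ (rather than $k^{\epsilon_r}$), and then propagate this through $n_j$ iterated differentiations of the chain and Leibniz expansions without losing the critical exponent $(\epsilon_r - \epsilon_{r+1})/2$ that distinguishes this estimate from the cruder symbol-class bound used on the shadow boundary. The prefactor $k$ in front of the sum in the final estimate comes from the leading order of $\eta^{\rm slow} \sim c_0 \, k \, b_{0,0} Z$, and the $C_k/c_k$ factor arises directly from C\'ea's lemma~\eqref{eq:Cea}.
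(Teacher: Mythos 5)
The overall architecture you propose (C\'ea's lemma, decoupling the $L^2(\partial K)$ norm over the intervals $\mathcal{I}_j$ by the unimodularity of $e^{ik\alpha\cdot\gamma}$, then bounding each local best-approximation error via the asymptotics of $\eta^{\rm slow}$) is correct, and indeed this is the structure that the paper delegates to \cite{EcevitOzen16}. Your bookkeeping of $\tau = k^{1/3}Z \gtrsim k^{\epsilon_{r+1}}$ on the transition subinterval $\itone^r$ is also right, and you correctly identify that this is where the delicate exponent comes from.

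However, the polynomial approximation estimate you write down is the wrong one, and the gap is exactly large enough to spoil the claimed rates on the transition regions and on $\il, \ds$. You propose the \emph{unweighted} Jackson bound
\[
\inf_{p \in \Pol_{d_j}} \Vert f - p \Vert_{L^2(\mathcal{I}_j)} \lesssim_{n_j} \left( \vert \mathcal{I}_j \vert / d_j \right)^{n_j} \Vert f^{(n_j)} \Vert_{L^2(\mathcal{I}_j)},
\]
whereas the estimate that the argument actually requires is the Jacobi-weighted version of Theorem~\ref{thm:pae}, whose semi-norm carries the weight $(s-a)^{n}(b-s)^{n}$ that vanishes to $n$-th order at both endpoints of $\mathcal{I}_j$. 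This is not a cosmetic difference. On $\itone^r = [a,b]$ with $a = t_1 + \xi_1 k^{-1/3+\epsilon_{r+1}}$, $b = t_1 + \xi_1 k^{-1/3+\epsilon_r}$, the derivative $D^{n}\eta^{\rm slow}$ behaves like $(s-t_1)^{-(n+2)}$, which is sharply peaked at the endpoint $s=a$ closest to the shadow boundary. The weight $(s-a)^{n}$ vanishes there, and a short computation gives
\[
\int_{a}^{b} (s-t_1)^{-2(n+2)} (s-a)^{n}(b-s)^{n}\,ds \;\lesssim\; (b-t_1)^{n}\,(a-t_1)^{-(n+3)},
\]
so the weighted semi-norm $\left| \eta^{\rm slow}\right|_{n,\mathcal{I}_j}$ scales like $(a-t_1)^{-(n+3)/2}(b-t_1)^{n/2}$, which produces exactly $k\,E(k,j) = k\cdot k^{-(1+3\epsilon_{r+1})/2}(k^{(\epsilon_r-\epsilon_{r+1})/2})^{n}$. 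Your unweighted estimate, by contrast, gives $\Vert D^{n}\eta^{\rm slow}\Vert_{L^2(\mathcal{I}_j)}\sim (a-t_1)^{-(n+3/2)}$ and hence, after multiplying by $|\mathcal{I}_j|^{n}\sim (b-t_1)^{n}$, yields $(k^{\epsilon_r-\epsilon_{r+1}})^{n}$ in place of $(k^{(\epsilon_r-\epsilon_{r+1})/2})^{n}$ --- the exponent is doubled, which destroys the $\mathcal{O}(\log k)$ balancing of direct summands that motivates the theorem. The same factor-of-two loss occurs on the illuminated and deep-shadow intervals where the singular behaviour at the endpoint abutting the outermost transition layer must likewise be tamed by the vanishing weight.

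Two smaller remarks. First, you propose to differentiate the formal asymptotic expansion of Theorem~\ref{thm:ecevit} directly and control the result via Fa\`a di Bruno/Leibniz, which is feasible but considerably heavier than invoking the explicit pointwise derivative bound of Theorem~\ref{thm:etaslowder} (already established in \cite{DominguezEtAl07} from those asymptotics); the latter is the natural technical input and is what the paper uses in the analogous parts of the proof of Theorem~\ref{thm:changeofvariables}. Second, your claimed identity
\[
\inf_{\hat{\mu} \in \mathcal{A}_{\mathbf{d}}} \Vert \eta - \hat{\mu} \Vert^{2}_{L^2(\partial K)}
= \sum_{j=1}^{4m} \inf_{p_j \in \Pol_{d_j}} \Vert \eta^{\rm slow} - p_j \Vert^{2}_{L^2(\mathcal{I}_j)}
\]
is correct precisely because the $\mathcal{I}_j$ overlap only at endpoints and $|e^{ik\alpha\cdot\gamma}|=1$, and in the form of an inequality (with a triangle-inequality sum rather than an exact Pythagorean identity) it is also how the paper opens the proof of Theorem~\ref{thm:changeofvariables}; so that step is fine.
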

The proof of Theorem \ref{thm:ecevitozen} is similar to that of Theorem $1$ in \cite{EcevitOzen16} and
is therefore skipped. On the other hand, exactly as Theorem 1 therein implies Corollary 1 in
\cite{EcevitOzen16}, Theorem \ref{thm:ecevitozen} above yields the following result.

\begin{corollary}\label{corollary:algebraic1}
Under the assumptions of Theorem \ref{thm:ecevitozen}, if $\epsilon_j$ are chosen as
\[
	\epsilon_{j} = \dfrac{1}{3} \, \dfrac{2m-2j+1}{2m+1},
	\qquad
	 j = 1,\ldots,m, 
\]
and the same polynomial degree $d = d_1 = \ldots = d_{4m}$ is used on each interval,
then for all $n \in \{ 0,\ldots,d+1 \}$, there holds
\begin{equation} \label{eq:oldalgerror}
	\dfrac{\Vert \eta - \hat{\eta} \Vert_{_{L^2(\partial K)}}}{\Vert \eta \Vert_{_{L^2(\partial K)}}}
	\lesssim_{n} \dfrac{C_k}{c_k} \, m \, \dfrac{1+ k^{-\frac{1}{2}} \left( k^{\frac{1}{6m+3}} \right)^{n}}{d^{n}}
\end{equation}
for the Galerkin solution $\hat{\eta}$ to (\ref{eq:galerkin-formulation}).
\end{corollary}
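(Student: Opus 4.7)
The plan is to derive the bound by substituting the prescribed values of $\epsilon_j$ into the estimate of Theorem~\ref{thm:ecevitozen} and showing that every one of the three $E(k,j)$-expressions collapses to (a quantity bounded by) the single form $k^{-1/2}(k^{1/(6m+3)})^{n}$. The resulting sum over the $4m$ intervals then produces the factor $m$, and dividing by $\|\eta\|_{L^{2}(\partial K)}$, which has the same asymptotic order as $k$ as $k\to\infty$ (as recalled immediately before Corollary~\ref{cor:chvar}), cancels the overall $k$ in Theorem~\ref{thm:ecevitozen} to yield the stated relative error bound.

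First I would record the elementary identities that the choice $\epsilon_{j}=\tfrac{1}{3}\cdot\tfrac{2m-2j+1}{2m+1}$ produces:
\begin{align*}
	\epsilon_{r}-\epsilon_{r+1} &= \dfrac{2}{3(2m+1)}, \quad r=1,\ldots,m-1,\\
	\tfrac{1}{3}-\epsilon_{1} &= \dfrac{2}{3(2m+1)},\\
	\epsilon_{m} &= \dfrac{1}{3(2m+1)},
\end{align*}
so that each of the exponents $(\epsilon_{r}-\epsilon_{r+1})/2$, $(1/3-\epsilon_{1})/2$, and $\epsilon_{m}$ is equal to $1/(6m+3)$. Consequently, the $n$-dependent factor $(k^{\cdot})^{n}$ in every case in Theorem~\ref{thm:ecevitozen} reduces to $(k^{1/(6m+3)})^{n}$.

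Next I would handle the non $n$-dependent prefactors. For the transition regions ($j=1,\ldots,4m-4$), since $\epsilon_{r+1}>0$, we have $k^{-(1+3\epsilon_{r+1})/2}\le k^{-1/2}$, and similarly for the illuminated and shadow regions ($j=4m-1,4m$) we have $k^{-(1+3\epsilon_{1})/2}\le k^{-1/2}$; for the shadow boundaries ($j=4m-3,4m-2$) the prefactor is already $k^{-1/2}$. Therefore in all three cases
\[
	E(k,j)\ \le\ k^{-1/2}\bigl(k^{1/(6m+3)}\bigr)^{n}.
\]
Substituting this into the estimate of Theorem~\ref{thm:ecevitozen} with $d_{j}=d$ and $n_{j}=n$, the sum consists of $4m$ identical upper bounds, giving
\[
	\|\eta-\hat{\eta}\|_{L^{2}(\partial K)}
	\lesssim_{n}\ \dfrac{C_{k}}{c_{k}}\,k\,m\,
	\dfrac{1+k^{-1/2}\bigl(k^{1/(6m+3)}\bigr)^{n}}{d^{n}}.
\]

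Finally, using $\|\eta\|_{L^{2}(\partial K)}\gtrsim k$ to absorb the $k$ on the right-hand side yields the claimed inequality \eqref{eq:oldalgerror}. Since every step is a substitution or a trivial monotonicity estimate once Theorem~\ref{thm:ecevitozen} is in hand, there is no genuine obstacle: the only delicate point is verifying that each of the three piecewise-defined $E(k,j)$ expressions truly collapses to the same form, which is confirmed by the identities above and by checking that the discarded prefactors $k^{-3\epsilon_{r+1}/2}$ and $k^{-3\epsilon_{1}/2}$ are all $\le 1$.
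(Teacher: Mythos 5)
Your proof is correct and takes the natural route, which is also the one the paper implicitly relies on (the paper defers to the analogous deduction of Corollary~1 from Theorem~1 in \cite{EcevitOzen16} rather than writing it out). You correctly verify the three identities showing $(\epsilon_r-\epsilon_{r+1})/2 = (1/3-\epsilon_1)/2 = \epsilon_m = 1/(6m+3)$, observe that the remaining prefactors are all $\le k^{-1/2}$ since $\epsilon_1, \epsilon_{r+1} > 0$, sum the $4m$ identical bounds to produce the factor $m$, and use $\Vert \eta \Vert_{L^2(\partial K)} \sim k$ (recalled in the text before Corollary~\ref{cor:chvar}) to cancel the overall factor of $k$.
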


\begin{remark} Note specifically that if $m$ $($and thus the total number of degrees of freedom$)$
increases in parallel with $\log k$, then $k^{\frac{1}{6m+3}} = \exp(\log k/(6m+3))$ is bounded independently
of $k$, and the estimate \eqref{eq:oldalgerror} takes on the form
\[
	\dfrac{\Vert \eta - \hat{\eta} \Vert_{_{L^2(\partial K)}}}{\Vert \eta \Vert_{_{L^2(\partial K)}}}
	\lesssim_{n} \dfrac{C_k}{c_k} \, \dfrac{\log k}{d^{n}}.
\]
As for the estimate \eqref{eq:newalgerror} in Corollary \ref{cor:chvar} relating to the Galerkin schemes
based on changes of variables, if the common local polynomial degree $d$ is proportional to $\log k$ $($say $d \approx d_0 \, \log k$, with $d_0$
independent of $k$, so that the total number of degrees of freedom increases as $\log k$$)$, then the estimate 
in Corollary reduces to
\[
	\dfrac{\Vert \eta - \hat{\eta} \Vert_{_{L^2(\partial K)}}}{\Vert \eta \Vert_{_{L^2(\partial K)}}}
	\lesssim_{n,k_0} \dfrac{C_k}{c_k} \, \dfrac{\sqrt{\log k}}{d_0^n}.
\]
This shows that the Galerkin schemes based on change of variables display better approximation properties
when compared with the frequency-adapted Galerkin schemes in \cite{EcevitOzen16}. Furthermore, since $C_k/c_k = \mathcal{O}(k^{\delta})$
with $\delta = 1/2$ for SCIE and $\delta = 1/3$ for CFIE when the obstacle $K$ is strictly convex (see \cite{BaskinEtAl16}),
given $\varepsilon >0$, if $d_0$ grows in parallel with $k^{\varepsilon}$,
for sufficiently large $n_0$, there holds
\[
	\dfrac{\Vert \eta - \hat{\eta} \Vert_{_{L^2(\partial K)}}}{\Vert \eta \Vert_{_{L^2(\partial K)}}}
	\lesssim_{n,k_0} \dfrac{1}{d_0^{n-n_0}}
\]
for all $n \ge n_0$. This shows that, for any $\epsilon >0$, increasing the total number of degrees of freedom associated
with the Galerkin schemes based on change of variables as $\mathcal{O}(k^{\epsilon})$ is sufficient to obtain any
prescribed accuracy independent of frequency.  
\end{remark}

\section{Error analysis}
\label{sec:4}

In this section we present the proof of Theorem \ref{thm:changeofvariables}. In light of inequality \eqref{eq:Cea},
it is sufficient to estimate
\[
	\inf_{\hat{\mu} \in \mathcal{A}_{\mathbf{d}}^{\mathcal{C}}} \Vert \eta- \hat{\mu} \Vert_{L^2(\partial K)}.
\]
To this end, we make use of the following classical result from approximation theory.
\begin{theorem}[Best approximation by algebraic polynomials \cite{Schwab98}]
\label{thm:pae}
Given an interval $\mathcal{I} = (a,b)$ and $n \in \mathbb{Z}_+$, introduce the semi--norms $($for suitable $f$$)$ by
\begin{equation} \label{eq:seminorm}
	\left| f \right|_{n,\mathcal{I}} = 
	\left[
		\int_{a}^{b}
		\left| D^{n} f(s) \right|^{2}
		\left( s-a \right)^{n}
		\left( b-s \right)^{n}
		ds
	\right]^{1/2} .
\end{equation}
Then, for all $n \in \{ 0, \ldots, d+1\}$, there holds
\begin{equation*}
	\inf_{p \in \mathbb{P}_d}
	\Vert f-p \Vert
	\lesssim_{n} \left| f \right|_{n,\mathcal{I}} \, d^{-n}.
\end{equation*}
\end{theorem}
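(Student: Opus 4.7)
The plan is to prove this by reducing to the reference interval $(-1,1)$ and then using an orthogonal expansion argument adapted to the Jacobi-type weight $(1-x^2)^n$. First I would observe that under the affine change of variables $s = a + \tfrac{b-a}{2}(x+1)$, both sides of the inequality scale in exactly the same way: writing $F(x) = f(s(x))$ and $P(x) = p(s(x))$, a direct computation shows that $\Vert f - p \Vert_{L^2(\mathcal{I})}^2 = \tfrac{b-a}{2} \, \Vert F - P \Vert_{L^2(-1,1)}^2$ while $\left| f \right|_{n,\mathcal{I}}^2 = \tfrac{b-a}{2} \, \left| F \right|_{n,(-1,1)}^2$ because the factors of $\tfrac{b-a}{2}$ from the chain rule, the weight, and the Jacobian cancel exactly. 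Since the affine map also sends $\mathbb{P}_d$ to itself, it suffices to establish the estimate on the reference interval $(-1,1)$ with weight $(1-x^2)^n$.

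On $(-1,1)$ I would expand $F$ in Legendre polynomials, $F = \sum_{k \ge 0} \hat{F}_k P_k$, and exploit that the best $L^2$-approximation in $\mathbb{P}_d$ is precisely the truncation $P_d F = \sum_{k \le d} \hat{F}_k P_k$, so that by orthogonality
\[
	\Vert F - P_d F \Vert_{L^2(-1,1)}^2 = \sum_{k > d} |\hat{F}_k|^2 \, \frac{2}{2k+1}.
\]
The key analytic ingredient is the classical identity that the $n$-th derivatives of Legendre polynomials are mutually orthogonal with respect to the weight $(1-x^2)^n$, together with the normalization
\[
	\int_{-1}^{1} \bigl[ P_k^{(n)}(x) \bigr]^2 \, (1-x^2)^n \, dx = \frac{2}{2k+1} \cdot \frac{(k+n)!}{(k-n)!}, \qquad k \ge n.
\]
Differentiating the Legendre series term-by-term and using Parseval in this weighted inner product then yields
\[
	\left| F \right|_{n,(-1,1)}^2 = \sum_{k \ge n} |\hat{F}_k|^2 \, \frac{2}{2k+1} \cdot \frac{(k+n)!}{(k-n)!}.
\]

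Comparing the two sums, for each $k > d$ (which, under the hypothesis $n \le d+1$, automatically satisfies $k \ge n$) the weighted coefficient in the semi-norm exceeds the corresponding coefficient in the error by a factor $(k+n)!/(k-n)! = (k-n+1)(k-n+2)\cdots(k+n)$, a product of $2n$ consecutive integers all greater than $d-n+1$. Hence $(k+n)!/(k-n)! \gtrsim_n d^{2n}$ uniformly in $k > d$, from which
\[
	\Vert F - P_d F \Vert_{L^2(-1,1)}^2 \le \Bigl( \sup_{k > d} \frac{(k-n)!}{(k+n)!} \Bigr) \left| F \right|_{n,(-1,1)}^2 \lesssim_n d^{-2n} \left| F \right|_{n,(-1,1)}^2,
\]
and taking square roots and transferring back via the scaling identity completes the argument. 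The main technical obstacle is getting the implied constants in the lower bound $(k+n)!/(k-n)! \gtrsim_n d^{2n}$ to behave correctly in the edge case $n = d+1$, where the range $k > d$ and $k \ge n$ just barely overlap; this is resolved by noting that in that regime the product $(d-n+2)(d-n+3)\cdots(d+n+1)$ is bounded below by a constant times $d^{2n}$ provided $n$ is fixed, which is exactly the dependence $\lesssim_n$ allowed in the statement.
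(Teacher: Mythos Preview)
The paper does not prove this statement: it is quoted as a classical approximation-theoretic result from Schwab's book \cite{Schwab98} and used as a black box in the error analysis of \S\ref{sec:4}. Your sketch is in fact the standard proof (essentially the one in Schwab), so there is nothing to compare against.

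For what it is worth, your argument is correct. The affine-scaling reduction is exactly right, and the identity
\[
\int_{-1}^{1} \bigl[P_k^{(n)}(x)\bigr]^2 (1-x^2)^n\,dx = \frac{2}{2k+1}\cdot\frac{(k+n)!}{(k-n)!}
\]
is the key classical fact (derivatives of Legendre polynomials are orthogonal ultraspherical polynomials). One small point: your claim that the $2n$ factors in $(k+n)!/(k-n)!$ are ``all greater than $d-n+1$'' does not by itself give $\gtrsim_n d^{2n}$, since $d-n+1$ can be as small as $0$ when $n=d+1$. The clean way to handle this is to split: for $d \ge 2n$ every factor exceeds $d/2$, giving the product $\ge (d/2)^{2n}$; for $n-1 \le d < 2n$ both the product and $d^{2n}$ lie in a finite range depending only on $n$, so the ratio is bounded below by a positive $n$-dependent constant. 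You essentially say this in your final paragraph, so the argument closes.
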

Use of Theorem \ref{thm:pae}, in turn, requires the knowledge of derivative estimates of $\eta^{\rm slow}$
which we present next.
\begin{theorem} \label{thm:etaslowder}
Given $k_0 > 0$, there holds
\begin{equation*}
	\left| D_{s}^{n} \eta^{\rm slow}(s,k) \right|
	\lesssim_{n,k_0}
	k + \sum\limits_{m=4}^{n+2} \left( k^{-1/3} + \left| w(s) \right| \right)^{-m}
\end{equation*}
for all $n \in \mathbb{Z}_+$ and all $ k \ge k_0$. Here $w(s) = (s-t_{1})(t_{2}-s)$.
\end{theorem}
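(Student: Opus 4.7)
The plan is to use Theorem \ref{thm:ecevit} to reduce the estimate to differentiation of the individual summands of the asymptotic expansion. First I would truncate, writing $\eta^{\rm slow} = \sum_{p+q \le N} a_{p,q} + R_N$ for $N$ chosen sufficiently large in terms of $n$; since the remainder $R_N$ lies in a lower-order H\"{o}rmander symbol class (a standard byproduct of the construction in \cite{EcevitReitich09}), one has $|D_s^n R_N(s,k)| \lesssim k$. It therefore suffices to estimate $|D_s^n a_{p,q}|$ uniformly over the finitely many remaining indices $(p,q)$.

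Applying the Leibniz rule and Fa\`a di Bruno's formula to $a_{p,q}(s,k) = k^{2/3 - 2p/3 - q}\, b_{p,q}(s)\,\Psi^{(p)}(k^{1/3}Z(s))$, each resulting contribution has the form $k^{2/3 - 2p/3 - q + r/3}\,\Psi^{(p+r)}(k^{1/3}Z(s))$ with $0 \le r \le n$, times a bounded combination of derivatives of $b_{p,q}$ and of $Z$. The crucial input is the behaviour of $\Psi$. From the asymptotic expansion $\Psi(\tau) \sim c_0 \tau + c_1 \tau^{-2} + \cdots$ together with smoothness one obtains $|\Psi(\tau)| \lesssim 1+\tau$, $|\Psi'(\tau)| \lesssim 1$, and, crucially, for each $r \ge 2$,
\[
	\bigl| \Psi^{(r)}(\tau) \bigr| \lesssim_r \bigl( 1 + |\tau| \bigr)^{-(r+2)},
\]
since differentiating twice or more annihilates the leading term $c_0 \tau$ and the next contribution is $c_1 \tau^{-2}$; the Schwartz decay at $-\infty$ is even stronger and is absorbed into the same bound.

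For the assembly, I use the identity $(1 + k^{1/3}|Z|)^{-m} = k^{-m/3}(k^{-1/3} + |Z|)^{-m}$. Substituting the estimate above, the Fa\`a di Bruno terms with $p + r \ge 2$ are controlled by
\[
	k^{-p - q} \bigl( k^{-1/3} + |Z(s)| \bigr)^{-(p + r + 2)},
\]
while the cases $r \le 1$ each contribute at most $O(k)$. The elementary inequality $(k^{-1/3} + |Z|)^{-1} \le k^{1/3}$ shows that the factor $k^{-p}$ absorbs the elevated exponent, so that terms with $p \ge 1$ are dominated by the $p = 0$ contributions; these last produce precisely $(k^{-1/3} + |Z|)^{-m}$ for $m = r + 2 \in \{4, \ldots, n+2\}$. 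Since $Z$ and $w$ both vanish to first order at $t_1, t_2$ and are bounded below elsewhere, $|Z(s)| \gtrsim |w(s)|$ uniformly on $\partial K$, so one may replace $|Z|$ by $|w|$ in the final bound. The main obstacle I anticipate is the careful bookkeeping of the divergent asymptotic expansion, and in particular verifying that the remainder $R_N$ satisfies the expected symbol bound with constants uniform in the stated range of $n$, together with the Fa\`a di Bruno combinatorics that must be absorbed into the implicit constants $\lesssim_{n,k_0}$.
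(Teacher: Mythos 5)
Your proposal is mathematically sound in outline but takes a genuinely different route from the paper. The paper's proof of Theorem~\ref{thm:etaslowder} is a two-sentence affair: it observes that the identical estimate is proved for all sufficiently large $k$ in the cited work of Dom\'{i}nguez \emph{et al.}, and then extends the range to all $k \ge k_0$ by appealing to continuity of $(s,k) \mapsto D_s^n \eta^{\rm slow}(s,k)$ on the compact set $\partial K \times [k_0, k_1]$ for any fixed $k_1$. Your approach, in contrast, reconstructs the estimate directly from the Melrose--Taylor/Ecevit--Reitich asymptotics of Theorem~\ref{thm:ecevit}: truncate the symbol expansion, apply Leibniz and Fa\`a di Bruno to each $a_{p,q}$, exploit the derivative decay $|\Psi^{(r)}(\tau)| \lesssim_r (1+|\tau|)^{-(r+2)}$ for $r \ge 2$ (a consequence of the fact that $D_\tau^2$ annihilates the leading $c_0 \tau$ term), and assemble via $(1+k^{1/3}|Z|)^{-m} = k^{-m/3}(k^{-1/3}+|Z|)^{-m}$ and $|Z| \gtrsim |w|$. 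This is, in effect, a reproduction of the argument the paper delegates to its reference.

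What each approach buys: the paper's citation-plus-continuity argument is efficient but leans entirely on an external result, and the continuity step is a genuine necessity there because the cited bound is asserted only for $k$ large. Your direct argument, if the symbol-class bound on the remainder $R_N$ is established uniformly for $k \ge k_0$, yields the estimate for the whole range at once and makes the dependence on the structure of $\Psi$ and $Z$ transparent; you also correctly isolate where the exponents $m \in \{4,\dots,n+2\}$ arise (from the $p=0, r \ge 2$ contributions) and why the $p \ge 1$ terms are subsumed. One small bookkeeping point you glide over: for $p \ge 1$ the absorption via $(k^{-1/3}+|Z|)^{-1} \le k^{1/3}$ leaves residual factors $(k^{-1/3}+|Z|)^{-(r+2)}$ with $r+2$ possibly equal to $2$ or $3$, which do not appear in the stated sum $\sum_{m=4}^{n+2}$; these must be observed to be $\le k$ (again by the same elementary inequality) and hence absorbed into the leading $k$ term. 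You also implicitly assume the remainder bound $|D_s^n R_N| \lesssim k$ holds uniformly down to $k \ge k_0$ rather than only asymptotically as $k \to \infty$; if that uniformity is not readily available from \cite{EcevitReitich09}, you would need exactly the continuity argument the paper uses, applied to $R_N$ on a compact $k$-range.
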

\begin{proof}
The same estimate is shown to hold for all sufficiently large $k$ in \cite{DominguezEtAl07}.
Since $D_{s}^{n} \eta^{\rm slow}(s,k)$ depends continuously on $s$ and $k$, the result follows. 
\end{proof}

We continue with the derivation of estimates on the derivatives of the change of variables $\phi$
on the transition interval $\mathcal{I}_j$ $(j=1,2,3,4)$.
\begin{proposition} \label{prop:phider}
Given $k_0 > 1$, there holds
\[
	\left| D^n_s \phi \right|
	\lesssim_{n,k_0} \left( \log k \right)^{n}  k^{\psi}
	\quad
	\text{on } \mathcal{I}_j \quad (j=1,2,3,4),
\]
for all $n \in \mathbb{N}$ and all $k \ge k_0$.
\end{proposition}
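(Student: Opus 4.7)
The plan is to differentiate the explicit formula $\phi(s) = t_j \pm \varphi(s)\, k^{\psi(s)}$ on each transition interval $\mathcal{I}_j$ ($j=1,2,3,4$) directly. First I would write $k^{\psi(s)} = e^{\psi(s)\log k}$ and observe that, since $\psi$ is linear on $\mathcal{I}_j$, its derivative $\psi'$ is a constant and $\psi^{(m)} = 0$ for $m\ge 2$; hence
\[
	D_s^n \bigl(k^{\psi(s)}\bigr) = \bigl(\psi'\,\log k\bigr)^{n}\, k^{\psi(s)}.
\]
Since $\varphi$ is affine, $\varphi^{(m)} = 0$ for $m \ge 2$, so the Leibniz rule applied to the product $\varphi \cdot k^{\psi}$ collapses to at most two surviving terms:
\[
	D_s^n \phi = \pm \Bigl[\, \varphi(s)\,(\psi')^{n}\,(\log k)^{n} + n\,\varphi'\,(\psi')^{n-1}\,(\log k)^{n-1} \,\Bigr]\, k^{\psi(s)}.
\]

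The second step is to produce $k$-uniform bounds for the quantities $\varphi$, $\varphi'$ and $\psi'$ appearing above. Their explicit formulas depend on $k$ only through the interval length $b_j - a_j$; for instance, on $\itone$ one has $b_1 - a_1 = \xi_1' - \xi_1\, k^{-1/3}$, which under the admissibility condition $\xi_1 \le \xi_1'$ and the hypothesis $k \ge k_0 > 1$ lies in a fixed positive interval (after possibly enlarging $k_0$ to guarantee $\xi_1' - \xi_1\, k_0^{-1/3} > 0$). The analogous observation holds on the other three transition intervals. Consequently $|\varphi|$, $|\varphi'|$ and $|\psi'|$ are each bounded by a constant depending only on the fixed geometric parameters $\xi_j, \xi_j', \zeta_j, \zeta_j'$ and on $k_0$ but not on $k$.

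Inserting these bounds into the formula gives
\[
	|D_s^n \phi| \lesssim_{n,k_0}  (\log k)^{n}\, k^{\psi(s)} + (\log k)^{n-1}\, k^{\psi(s)},
\]
and since $k \ge k_0 > 1$ implies $\log k \ge \log k_0 > 0$, the lower-order term is absorbed via $(\log k)^{n-1} \le (\log k_0)^{-1}(\log k)^{n}$, yielding the claimed estimate. I do not expect a genuine obstacle here: both $\varphi$ and $\psi$ were designed precisely so that their derivatives are $k$-independent constants on each $\mathcal{I}_j$, making the estimate a direct Leibniz computation. The only point requiring care is verifying that $b_j - a_j$ stays uniformly pinched between positive constants for all $k \ge k_0$, which is exactly where the hypothesis $k_0 > 1$ is used.
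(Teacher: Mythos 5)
Your proposal is correct and follows essentially the same route as the paper's own proof: exploit the linearity of $\psi$ to get $D_s^n(k^{\psi}) = (\psi'\log k)^n k^{\psi}$, use that $\varphi$ is affine so the Leibniz sum collapses to two terms, then bound $\varphi$, $\varphi'$, $\psi'$ uniformly by controlling $b_j - a_j$ from below via $k\ge k_0 > 1$. (Two minor notes: no enlargement of $k_0$ is actually needed, since $\xi_1\le\xi_1'$ and $k_0>1$ already force $\xi_1' - \xi_1 k_0^{-1/3}>0$; and you correctly retained the binomial factor $\binom{n}{n-1}=n$ that the paper's displayed equation \eqref{eq:phider} drops, though this is harmless since it is absorbed into $\lesssim_n$.)
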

\begin{proof}
Since the proof is similar for $j=1,2,3,4$, we concentrate on the case $j=1$.
Now since $\varphi'' = \psi'' = 0$, direct computations entail
\begin{equation} \label{eq:phider}
	D^n_s \phi
	= \sum_{j=0}^{n} \binom{n}{j} \, D^{n-j}_s \varphi \ D^{j}_s k^{\psi}
	= \varphi \ D^{n}_s k^{\psi} + D^{1}_s \varphi \ D^{n-1}_s k^{\psi},
	\qquad
	n \ge 1,
\end{equation}
and
\begin{equation} \label{eq:kpsider}
	D^n_s k^{\psi}
	= \left( D^{1}_s \psi \right)^n \left( \log k \right)^n k^{\psi},
	\qquad
	n \ge 0.
\end{equation}
Using \eqref{eq:kpsider} in \eqref{eq:phider}, we obtain
\begin{equation} \label{eq:phiderfinal}
	D^n_s \phi
	= \left( \varphi \ D^{1}_s \psi \ \log k + D^{1}_s \varphi \right) \left( D^{1}_s \psi \right)^{n-1} \left( \log k \right)^{n-1}  k^{\psi},
	\qquad
	n \ge 1.
\end{equation}
Since, for $k \ge k_0 > 1$,
\[
	\dfrac{1}{\left| b_1-a_1 \right|}
	= \dfrac{1}{\xi_1' - \xi_1k^{-1/3}}
	\le \dfrac{1}{\xi_1' - \xi_1k_0^{-1/3}} 
	\lesssim_{k_0} 1
\]
it follows for $s \in \mathcal{I}_1 = I_{IT_1}$ that
\[
	\left| D^{1}_s \psi \right|
	= \dfrac{1}{3} \, \dfrac{1}{b_1-a_1}
	\lesssim_{k_0} 1
	\qquad
	\text{and}
	\qquad
	\left| D^{1}_s \varphi \right|
	= \dfrac{\xi_1' - \xi_1}{b_1-a_1}
	\lesssim_{k_0} 1
\]
and, clearly, $\xi_1 \le \varphi \le \xi_1'$. Use of these inequalities in \eqref{eq:phiderfinal} yields
the desired result.
\end{proof}

Next we combine Theorem \ref{thm:etaslowder} and Proposition \ref{prop:phider} to derive estimates
on the derivatives of the composition $\eta^{\rm slow} \circ \phi$.
\begin{proposition} \label{prop:dercomp}
Given $k_0 > 0$, there holds
\begin{align*}
	\left| D_s^n (\eta^{slow} \circ \phi) \right|
	\lesssim_{n,k_0} k \left( \log k \right)^n
	\quad
	\text{on }
	\mathcal{I}_j
	\quad
	\left( j=1,2,3,4 \right),
\end{align*}
for all $n \in \mathbb{N}$ and all $k \ge k_0$.
\end{proposition}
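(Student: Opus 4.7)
The plan is to apply Faà di Bruno's formula to the composition, then control the two resulting factors separately using Theorem \ref{thm:etaslowder} and Proposition \ref{prop:phider}. Concretely, for $n \geq 1$, I would write
\[
	D_s^n \bigl( \eta^{\rm slow} \circ \phi \bigr)(s)
	= \sum_{m=1}^{n}
	\bigl( D^m \eta^{\rm slow} \bigr)\!\bigl( \phi(s), k \bigr) \,
	B_{n,m}\bigl( D \phi(s), \ldots, D^{n-m+1}\phi(s) \bigr),
\]
where $B_{n,m}$ is the partial Bell polynomial. The case $n=0$ is immediate from Theorem \ref{thm:etaslowder} at order zero, which gives $|\eta^{\rm slow}(\phi(s),k)| \lesssim k$.

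Since $B_{n,m}$ is a finite sum of monomials $\prod_{j} (D^j \phi)^{a_j}$ with $\sum_j a_j = m$ and $\sum_j j\, a_j = n$, Proposition \ref{prop:phider} gives the uniform bound
\[
	\bigl| B_{n,m}\bigl( D \phi, \ldots, D^{n-m+1}\phi \bigr) \bigr|
	\lesssim_{n,k_0} \bigl( \log k \bigr)^{n} k^{m \psi(s)},
\]
since each monomial contributes $(\log k)^{\sum j a_j} k^{\psi \sum a_j} = (\log k)^n k^{m\psi}$. The key geometric step, which I consider the crux of the proof, is to show that on each transition interval $\mathcal{I}_j$ ($j=1,2,3,4$) one has $|w(\phi(s))| \gtrsim k^{\psi(s)}$. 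I would verify this by direct inspection of the formula for $\phi$, e.g.\ on $\mathcal{I}_1 = I_{IT_1}$ where $\phi(s) - t_1 = \varphi(s) k^{\psi(s)}$ with $\varphi(s) \in [\xi_1, \xi_1']$, so that $\phi(s) - t_1 \gtrsim k^{\psi(s)}$ while $t_2 - \phi(s) \geq t_2 - t_1 - \xi_1' \gtrsim 1$ by the constraint $(A)$; the three remaining cases are analogous using $(A)$ or $(B)$.

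Combined with $\psi(s) \geq -1/3$, this geometric estimate yields $k^{-1/3} + |w(\phi(s))| \gtrsim k^{\psi(s)}$, and hence, by Theorem \ref{thm:etaslowder},
\[
	\bigl| D^m \eta^{\rm slow}(\phi(s), k) \bigr|
	\lesssim_{m,k_0} k + \sum_{r=4}^{m+2} k^{-r \psi(s)}.
\]
Multiplying by the Bell polynomial bound gives a sum of terms of the form $k (\log k)^n k^{m \psi(s)}$ and $(\log k)^n k^{(m-r)\psi(s)}$. Because $\psi(s) \in [-1/3, 0]$, the first type is dominated by $k (\log k)^n$, while for the second type the exponent satisfies $(m-r)\psi(s) \leq (r-m)/3 \leq 2/3$ (maximised at $r = m+2$ and $\psi(s) = -1/3$), so each such term is bounded by $k^{2/3} (\log k)^n \leq k (\log k)^n$. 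Summing over $m = 1, \ldots, n$ absorbs the $n$-dependent constants into the implicit multiplicative constant, which concludes the proof.
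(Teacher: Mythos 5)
Your proof is correct and takes essentially the same route as the paper's: Faà di Bruno to split the derivative of the composition into $\eta^{\rm slow}$-derivatives times products of $\phi$-derivatives, then Proposition~\ref{prop:phider} and Theorem~\ref{thm:etaslowder} to bound the two factors, with the crux being the observation that on each transition interval $|\phi - t_j| = \varphi\,k^{\psi}$ is comparable to $k^{\psi}$ while the distance to the other shadow-boundary point is bounded below by a fixed constant via (A) or (B), so that $k^{\psi}/|w(\phi)|$ stays bounded. The only cosmetic difference is that you organize the Faà di Bruno sum via partial Bell polynomials and bound each $(m-r)$-term individually, whereas the paper writes out the multi-index form and bounds the sum over $\ell$ by its largest term before factoring; both yield the same $k(\log k)^n$ estimate.
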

\begin{proof}
We fix $k \ge k_0 > 1$ and the interval $\mathcal{I}_j$ ($j=1,\ldots, 4$).
Fa\'{a} Di Bruno's formula for the derivatives of a composition states
\[
	D^{n} \left( f \circ g \right) \left( t \right)
	= \sum_{\left\{ m_{\ell} \right\}} (D^{m} f) (g(t))
	\prod_{\ell=1}^{n} \dfrac{\ell}{m_{\ell}!} \left( \dfrac{D^{\ell}g(t)}{\ell!} \right)^{m_{\ell}}
\]
where the summation is over all $m_{\ell} \in \mathbb{Z}_+$ with $n = \sum_{\ell=1}^{n} \ell m_{\ell}$;
here $m = \sum_{\ell=1}^{n} m_{\ell}$. This yields
\[
	\left| D^n_s \left( \eta^{\rm slow} \circ \phi \right) \right|
	\lesssim_{n} \sum_{\left\{ m_{\ell} \right\}} \left| (D^{m}_s \eta^{\rm slow}) (\phi) \right|
	\prod_{\ell=1}^{n} \left| D^{\ell}_s \phi \right|^{m_{\ell}}
\]
so that an appeal to Proposition \ref{prop:phider} entails
\[
	\left| D^n_s \left( \eta^{\rm slow} \circ \phi \right) \right|
	\lesssim_{n,k_{0}} \sum_{\left\{ m_{\ell} \right\}} \left| (D^{m}_s \eta^{\rm slow}) (\phi) \right|
	\prod_{\ell=1}^{n} \left( \left( \log k \right)^{\ell} k^{\psi} \right)^{m_{\ell}}.
\]
Since $n = \sum_{\ell=1}^{n} \ell m_{\ell}$ and $m = \sum_{\ell=1}^{n} m_{\ell}$, we therefore obtain
\begin{align*}
	\left| D^n_s \left( \eta^{\rm slow} \circ \phi \right) \right|
	& \lesssim_{n,k_{0}} \left( \log k \right)^n
	\sum_{\left\{ m_{\ell} \right\}} \left| (D^{m}_s \eta^{\rm slow}) (\phi) \right|
	k^{m\psi}
	\\
	& \lesssim_{n,k_{0}} \left( \log k \right)^n
	\sum_{m=0}^{n} \left| (D^{m}_s \eta^{\rm slow}) (\phi) \right|
	k^{m\psi}.
\end{align*}
It is hence sufficient to show, for $m \in \mathbb{Z}_+$, that
\[
	\left| (D^{m}_s \eta^{\rm slow}) (\phi) \right|
	k^{m\psi}
	\lesssim_{m,k_{0}} k.
\]
To this end, we note that if $0 \le \ell \le m$, then
\begin{align*}
	\left( k^{-1/3} + \left| \omega (\phi) \right| \right)^{-\ell}
	& = \left( k^{-1/3} + \left| \omega (\phi) \right| \right)^{-m}
	\left( k^{-1/3} + \left| \omega (\phi) \right| \right)^{ m-\ell}
	\\
	& \leq \left( k^{-1/3} + \left| \omega (\phi) \right| \right)^{-m}
	\left( k_0^{-1/3} + L^2 \right)^{m-\ell}
	\\
	&\lesssim_{m,k_0}  \left( k^{-1/3} + \left| \omega (\phi) \right| \right)^{-m}
\end{align*}
so that an appeal to Theorem \ref{thm:etaslowder} yields
\begin{align*}
	\left| D_{s}^{m} \eta^{\rm slow}(\phi) \right| k^{m\psi}
	& \lesssim_{m,k_0}
	\left[ k + \sum_{\ell=4}^{m+2} \left( k^{-1/3} + \left| \omega(\phi) \right| \right)^{-\ell} \right]
	k^{m\psi}
	\\
	& \lesssim_{m,k_0}
	\left[ k + \left( k^{-1/3} + \left| \omega(\phi) \right| \right)^{-\left( m+2 \right)} \right]
	k^{m\psi}
	\\
	& \lesssim_{m,k_0}
	k + \left( \dfrac{k^{\psi}}{k^{-1/3} + \left| \omega(\phi) \right|} \right)^{m}
	\left( k^{-1/3} + \left| \omega(\phi) \right| \right)^{-2}
	\\
	& \lesssim_{m,k_0}
	k + \left( \dfrac{k^{\psi}}{\left| \omega(\phi) \right|} \right)^{m}
	k^{2/3}
\end{align*}
where, in the third inequality, we used that $\psi \le 0$. Thus it is now enough to show that the quotient
$k^{\psi}/\left| \omega(\phi) \right|$ is bounded by a constant independent of $k$. This estimation
is similar on each of the transition intervals $\mathcal{I}_j$ ($j=1,2,3,4$) and we focus on $\mathcal{I}_1$.
Indeed, on $\mathcal{I}_1=I_{IT_1}$, we have $\phi - t_1 = \varphi k^{\psi}$, $\varphi \ge \xi_1 > 0$ and
$t_2 - \phi \ge t_2 - \left( t_1 + \xi_1' \right) \ge \xi_2' > 0$ so that 
\begin{align*}
	\dfrac{k^{\psi}}{\left| \omega(\phi) \right|}
	= \dfrac{k^{\psi}}{\left( \phi -t_1 \right) \left( t_2 - \phi \right)}
	= \dfrac{k^{\psi}}{\varphi k^{\psi} \left( t_2 - \phi \right)}
	\le \dfrac{1}{\xi_1 \, \xi_2'}.
\end{align*}
This finishes the proof.
\end{proof}

Next we estimate the semi-norms \eqref{eq:seminorm} for the composition $\eta^{\rm slow} \circ \phi$
on the transition intervals $\mathcal{I}_j$ $(j=1,2,3,4)$.

\begin{corollary} \label{cor:snetaslowcirphi}
On the transition intervals $\mathcal{I}_j$ $(j=1,2,3,4)$, given $k_0 > 1$, there holds
\[
	\left| \eta^{\rm slow} \circ \phi \right|_{n,\mathcal{I}_j}
	\lesssim_{n,k_0} k \left( \log k \right)^n
\]
for all $n \in \mathbb{N}$ and all $k \ge k_0$.
\end{corollary}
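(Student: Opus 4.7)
The plan is to derive the estimate by directly inserting the pointwise derivative bound from Proposition \ref{prop:dercomp} into the definition of the semi-norm. Since Proposition \ref{prop:dercomp} already delivers
\[
	\left| D_s^n (\eta^{\rm slow} \circ \phi)(s) \right|
	\lesssim_{n,k_0} k \left( \log k \right)^n,
	\qquad
	s \in \mathcal{I}_j,
\]
the hard analytical work (involving Fa\`a Di Bruno, the derivative bounds on $\phi$ from Proposition \ref{prop:phider}, and the derivative bounds on $\eta^{\rm slow}$ from Theorem \ref{thm:etaslowder}) is already done; only a routine weighted $L^2$ bound remains.

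More precisely, writing $\mathcal{I}_j = [a_j,b_j]$ and substituting the pointwise bound into \eqref{eq:seminorm}, I would obtain
\[
	\left| \eta^{\rm slow} \circ \phi \right|_{n,\mathcal{I}_j}^{2}
	\lesssim_{n,k_0} k^{2} \left( \log k \right)^{2n}
	\int_{a_j}^{b_j}
	\left( s - a_j \right)^{n}
	\left( b_j - s \right)^{n}
	ds.
\]
The remaining integral is (up to the usual Beta function constant depending only on $n$) equal to $B(n+1,n+1) \left( b_j - a_j \right)^{2n+1}$, hence bounded by a constant depending only on $n$ and on the length of $\mathcal{I}_j$. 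Since each transition interval satisfies $b_j - a_j \le \max \{ \xi_j', \zeta_j' \}$ uniformly in $k$ (indeed, the $k^{-1/3}$ endpoint only shrinks $\mathcal{I}_j$), the geometric factor is absorbed into the implicit constant.

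Taking square roots then yields the claim
\[
	\left| \eta^{\rm slow} \circ \phi \right|_{n,\mathcal{I}_j}
	\lesssim_{n,k_0} k \left( \log k \right)^{n}.
\]
I do not expect a genuine obstacle here: the whole argument is a one-line substitution followed by the trivial observation that the weight $(s-a_j)^{n}(b_j-s)^{n}$ integrates to a $k$-independent constant on a bounded interval. The only thing to double-check is that the upper bound on $b_j - a_j$ does not depend on $k$, which is immediate from the defining formulas for the transition intervals $\itone, \ittwo, \stone, \sttwo$.
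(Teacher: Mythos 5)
Your proposal is correct and is essentially identical to the paper's own proof: both invoke Proposition~\ref{prop:dercomp} for the pointwise bound, substitute into the semi-norm definition \eqref{eq:seminorm}, and observe that the weighted integral contributes only a $k$-independent constant (the paper simply bounds $b_j-a_j<L$ rather than tracking the Beta integral and the $\xi',\zeta'$ parameters, but this is a cosmetic difference).
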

\begin{proof}
On account of Proposition \ref{prop:dercomp}, we estimate for $j=1,2,3,4$
\begin{align*}
	\left| \eta^{\rm slow} \circ \phi \right|^2_{n,\mathcal{I}_j}
	& = \int_{a_j}^{b_j} \left| D^n_s\left( \eta^{\rm slow} \circ \phi \right) (s) \right|^2 \left( s - a_j \right)^n \left( b_j -s \right)^n ds
	\\
	& \lesssim_{n,k_0} k^2 \left( \log k \right)^{2n} \int_{a_j}^{b_j} \left( s - a_j \right)^n \left( b_j -s \right)^n ds
	\\
	& \lesssim_{n,k_0} k^2 \left( \log k \right)^{2n}
\end{align*}
where we used that $0< b_j-a_j <L$. Thus the result.
\end{proof}

We are now ready to prove Theorem \ref{thm:changeofvariables}.

\begin{proof}(of Theorem \ref{thm:changeofvariables}):
While C\'{e}a's lemma (cf. inequality \eqref{eq:Cea}) entails
\begin{equation}
\label{eq:CeaCd}
	\Vert \eta -\hat{\eta} \Vert_{L^2 \left( \partial K \right)}
	\leq \dfrac{C_k}{c_k}
	\inf_{\hat{\mu} \in \mathcal{A}_{\mathbf{d}}^{\mathcal{C}}} \Vert \eta- \hat{\mu} \Vert_{L^2 \left( \partial K \right)}
\end{equation}
for the unique solution $\hat{\eta}$ of the Galerkin formulation \eqref{eq:galerkin-formulation-chvar},
as we identify $L^2 \left( \partial K \right)$ with $L^2 ( \cup_{j=1}^{J} \mathcal{I}_j )$ through
the $L-$periodic arc length parameterization $\gamma$, there holds
\[
	\Vert \eta- \hat{\mu} \Vert_{L^2(\partial K)}
	= \Vert \eta- \hat{\mu} \Vert_{L^2 \left( \cup_{j=1}^{J} \mathcal{I}_j \right)}
	\le \sum_{j=1}^{J} \Vert \eta- \hat{\mu} \Vert_{L^2 \left( \mathcal{I}_j \right)}
\]
for any $\hat{\mu} \in \mathcal{A}_{\mathbf{d}}^{\mathcal{C}}$.
Accordingly, the very definition of Galerkin approximation spaces
$\mathcal{A}_{\mathbf{d}}^{\mathcal{C}}$ entails
\begin{equation} \label{eq:expand}
	\inf_{\hat{\mu} \in \mathcal{A}_{\mathbf{d}}^{\mathcal{C}}} \Vert \eta- \hat{\mu} \Vert_{L^2(\partial K)}
	\le \sum_{j=1}^{4} \inf_{p \in \mathbb{P}_{d_j}} \Vert \eta^{\rm slow} - p \circ \phi^{-1} \Vert_{L^2(\mathcal{I}_j)}
	+ \sum_{j=5}^{J} \inf_{p \in \mathbb{P}_{d_j}} \Vert \eta^{\rm slow} - p \Vert_{L^2(\mathcal{I}_j)}.
\end{equation}
On the other hand, utilizing the change of variables $\phi$ on the transition intervals $\mathcal{I}_j$
$(j=1,2,3,4)$, for any $p \in \mathbb{P}_{d_j}$, we have
\begin{align*}
	\Vert \eta^{\rm slow} - p \circ \phi^{-1} \Vert^2_{L^2(\mathcal{I}_j)}
	& = \int_{a_j}^{b_j} \left| \left( \eta^{\rm slow} - p \circ \phi^{-1}\right)(s) \right|^2 ds
	\\
	& = \int_{a_j}^{b_j} \left| \left( \eta^{\rm slow} \circ \phi - p \right)(s) \right|^2 D^{1}_s\phi(s) \, ds
	\\
	& \lesssim_{k_0} \log k \ \Vert \eta^{\rm slow} \circ \phi - p \Vert^2_{L^2(\mathcal{I}_j)}
\end{align*}
where we used Proposition \ref{prop:phider} in conjunction with the fact that $k^{\psi} < 1$. Combining
this last estimate with \eqref{eq:CeaCd} and \eqref{eq:expand}, we deduce
\[
	\Vert \eta -\hat{\eta} \Vert_{L^2 \left( \partial K \right)}
	\lesssim_{k_0} \dfrac{C_k}{c_k}
	\left\{
		\sum_{j=1}^{4} \left( \log k \right)^{1/2} \inf_{p \in \mathbb{P}_{d_j}} \Vert \eta^{\rm slow} \circ \phi - p \Vert_{L^2(\mathcal{I}_j)}
		+ \sum_{j=5}^{J} \inf_{p \in \mathbb{P}_{d_j}} \Vert \eta^{\rm slow} - p \Vert_{L^2(\mathcal{I}_j)}
	\right\}
\]
and this, on account of Theorem \ref{thm:pae}, implies
\[
	\Vert \eta -\hat{\eta} \Vert_{L^2 \left( \partial K \right)}
	\lesssim_{n_1,\ldots,n_{J},k_{0}} \dfrac{C_k}{c_k}
	\left\{
		\sum_{j=1}^{4} \left( \log k \right)^{1/2} \left| \eta^{\rm slow} \circ \phi \right|_{n_j,\mathcal{I}_j} d_j^{-n_j}
		+ \sum_{j=5}^{J} \left| \eta^{\rm slow} \right|_{n_j,\mathcal{I}_j} d_j^{-n_j}
	\right\}.
\]
Therefore, to complete the proof, it suffices to show that
\begin{equation} \label{eq:esttobeproved14}
	\left| \eta^{\rm slow} \circ \phi \right|_{n_j,\mathcal{I}_j}
	\lesssim_{n_j,k_0} k \left( \log k \right)^{n_j},
	\qquad
	j = 1,2,3,4,
\end{equation}
and
\begin{equation} \label{eq:esttobeproved56}
	\left| \eta^{\rm slow} \right|_{n_j,\mathcal{I}_j}
	\lesssim_{n_j,k_0} k \, k^{-1/6},
	\qquad
	j = 5,6,
\end{equation}
and (if $J=8$)
\begin{equation} \label{eq:esttobeproved78}
	\left| \eta^{\rm slow} \right|_{n_j,\mathcal{I}_j}
	\lesssim_{n_j,k_0} k,
	\qquad
	j = 7,8.
\end{equation}
While the estimates in \eqref{eq:esttobeproved14} are given by Corollary \ref{cor:snetaslowcirphi},
for the shadow boundary intervals $\mathcal{I}_j$ ($j=5,6$) we use Theorem \ref{thm:etaslowder}
to deduce
\begin{equation*}
	\left| D_{s}^{n_j} \eta^{\rm slow}(s,k) \right|
	\lesssim_{n_j,k_0}
	k + \sum_{m=4}^{n_j+2} \left( k^{-1/3} + \left| w(s) \right| \right)^{-m}
	\lesssim_{n_j,k_0} k + k^{(n_j+2)/3};
\end{equation*}
this implies
\begin{align*}
	\left| \eta^{\rm slow} \right|_{n_j,\mathcal{I}_j}^2
	& = \int_{a_j}^{b_j} \left| D^{n_j}_s \eta^{\rm slow}(s) \right|^2 \left( s -a_j \right)^{n_j} \left( b_j-s \right)^{n_j} ds
	\\
	& \lesssim_{n_j,k_0} \left( k + k^{(n_j+2)/3} \right)^2 \left( b_j -a_j \right)^{2n_j+1}
	\\
	& \lesssim_{n_j,k_0} \left( k + k^{(n_j+2)/3} \right)^2 \left( k^{-1/3} \right)^{2n_j+1}
	\\
	& \lesssim_{n_j,k_0} \left( k \, k^{-1/6} \right)^2
\end{align*}
which justifies the estimates in \eqref{eq:esttobeproved56}.  
This completes the proof when $J =6$.
 
When $J = 8$, for the illuminated and shadow region intervals $\mathcal{I}_j$ ($j=7,8$),
we use Theorem \ref{thm:etaslowder} to estimate
\begin{equation*}
	\left| D_{s}^{n_j} \eta^{\rm slow}(s,k) \right|
	\lesssim_{n_j,k_0} k + \sum_{m=4}^{n_j+2} \left( k^{-1/3} + \left| w(s) \right| \right)^{-m}
	\lesssim_{n_j,k_0} k + \sum_{m=4}^{n_j+2} \left| w(s) \right|^{-m}
	\lesssim_{n_j,k_0} k
\end{equation*}
so that
\begin{align*}
	\left| \eta^{\rm slow} \right|_{n_j,\mathcal{I}_j}^2
	& = \int_{a_j}^{b_j} \left| D^{n_j}_s \eta^{\rm slow}(s) \right|^2 \left( s -a_j \right)^{n_j} \left( b_j-s \right)^{n_j} ds
	\\
	& \lesssim_{n_j,k_0} k^2 \left( b_j -a_j \right)^{2n_j+1}
	\\
	& \lesssim_{n_j,k_0} k^2
\end{align*}
which verifies \eqref{eq:esttobeproved78}. This finishes the proof.
\end{proof}

\section{Numerical tests}
\label{sec:5}

While the earlier algorithms
\cite{BrunoEtAl04,Giladi07,HuybrechsVandewalle07,AsheimHuybrechs14,DominguezEtAl07}
concerning smooth convex obstacles are either not supported with rigorous numerical analysis
\cite{BrunoEtAl04,Giladi07,HuybrechsVandewalle07,AsheimHuybrechs14}
and/or remain asymptotic
\cite{BrunoEtAl04,HuybrechsVandewalle07,DominguezEtAl07}
as they approximate the solutions by zero in the deep shadow regions, our recent
frequency-adapted Galerkin boundary element methods \cite{EcevitOzen16}
are supported with a fully rigorous analysis and they provide approximations in the deep shadow
region. In this section, we therefore present numerical tests exhibiting the performance of
\emph{Galerkin boundary element methods based on changes of variables} developed herein
in comparison with those in \cite{EcevitOzen16}. Indeed, as the tests demonstrate, the new
schemes attain the same numerical accuracy with a reduced number of degrees of freedom
and, most strikingly, they provide significantly improved approximations in the \emph{shadow regions}.

On a related note, just as we have based the \emph{frequency-adapted Galerkin boundary element methods}
in \cite{EcevitOzen16} on either algebraic polynomials or trigonometric polynomials,
the Galerkin approximation spaces developed herein can also
be based on trigonometric polynomials in which case the Galerkin approximation spaces take
on the form
\begin{align*}
	\mathcal{T}^{\mathcal{C}}_{\mathbf{d}}
	= \bigoplus_{j=1}^{J}
	\charfunc_{\mathcal{I}_j} \ e^{i k \, \alpha \cdot \gamma} \ \mathcal{T}_{d_j}^{\mathcal{C}}
\end{align*}
where each $d_j$ is even,
\begin{align*}
	\mathcal{T}_{d_j}^{\mathcal{C}} =
	\left\{
		\begin{array}{cl}
			\mathbb{T}_{d_j}(\mathcal{I}_j) \circ \phi ^{-1} , & \mbox{if } \mathcal{I}_j \mbox{ is a transition region}, \\ [0.5 em]
			\mathbb{T}_{d_j}(\mathcal{I}_j) , & \mbox{otherwise},
		\end{array}
	\right.	
\end{align*}
and, for a generic interval $\mathcal{I} = [a,b]$ and an even integer $d$, $\mathbb{T}_d(\mathcal{I})$
is the space of $(b-a)$-periodic trigonometric polynomials of degree at most $d$ on $\mathcal{I}$.
In this section, we also present numerical tests that display the improvements provided
by the approximation spaces $\mathcal{T}^{\mathcal{C}}_{\mathbf{d}}$ over their trigonometric
counterparts
\begin{align*}
	\mathcal{T}_{\mathbf{d}}
	= \bigoplus_{j=1}^{4m}
	\charfunc_{\mathcal{I}_j} \ e^{i k \, \alpha \cdot \gamma} \ \mathbb{T}_{d_j}(\mathcal{I}_j)
\end{align*}
in \cite{EcevitOzen16}. Indeed, as in \cite{EcevitOzen16}, here the intervals
$\mathcal{I}_j$ are chosen to overlap with their immediate neighbors but the size of the overlap
diminishes as $k \to \infty$. This requirement is related with the need to introduce a smooth partition
of unity confined to the various regions on the boundary of the scatterer for both theoretical and
practical reasons. For details of this construction, we refer to \cite{EcevitOzen16}. Incidentally,
the error analysis corresponding to the spaces $\mathcal{T}^{\mathcal{C}}_{\mathbf{d}}$ can be
carried out utilizing the techniques in \S\ref{sec:4} in conjunction with those in \cite[\S4.2]{EcevitOzen16}.

An important component of our algorithms relates to the choice of bases for the spaces of algebraic and trigonometric polynomials
and, in order to minimize the numerical instabilities arising from the use of high-degree polynomials,
on any generic interval $\mathcal{I} = [a,b]$, we use the bases $\{ \rho^r : r=0,\ldots,d \}$ and $\{ (\rho \circ \phi^{-1})^r : r=0,\ldots,d \}$
for $\mathbb{P}_{d}$ and $\Pol_d \circ \phi^{-1}$ respectively where $\rho$ is the affine
function that maps the interval $\mathcal{I}$ onto $[-1,1]$. Similarly, for even values of $d$,
we employ the bases $\{ \exp({i r \rho}) : r=-\frac{d}{2},\ldots,\frac{d}{2} \}$ and
$\{ \exp({i r (\rho \circ \phi^{-1}}) : r=-\frac{d}{2},\ldots,\frac{d}{2} \}$ for $\Tri_d(\mathcal{I})$ and
$\Tri_d \circ \phi^{-1}(\mathcal{I})$ where, this time, $\rho$ maps the interval $\mathcal{I}$ onto
$[0,2\pi]$.

In the same vein, the choice of the parameters $\xi,\xi',\zeta,\zeta'$ appearing in the definitions of
the intervals $I_{IT_j},I_{ST_j},I_{SB_j}$ ($j=1,2$) and $I_{IT},I_{DS}$ is of great importance since
a random choice may result in a loss of accuracy due to poor resolution of the boundary layers in
the solution. We therefore optimize these parameters for a small wave number through a simple
iterative procedure, and use these values for all larger wave numbers. For instance, when $J = 6$,
we take $\xi_1' = \xi_2'$, $\zeta_1'=\zeta_2'$ and initially require that $\alpha \cdot \gamma(\xi_1') = -1$
and $\alpha \cdot \gamma(\zeta_1') = 1$. We then take $\xi_1$ to be the mid-point between
$t_1$ and $\xi_1'$, and change it in small increments until the local error in $IT_1 \cup SB_1$
is minimized. We treat the triplet $(t_2,\xi_2,\xi_2')$ similarly. Finally, we fix $\xi_1$ and $\xi_2$,
and change $\xi_1'=\xi_2'$ in small increments until the error in
$IL \cup IT_1 \cup IT_2 \cup SB_1 \cup SB_2$ is minimized. The optimization of $\zeta$
parameters is realized similarly. The computed values are taken as the initial guess
for the next iterate, and the procedure is repeated with smaller increments, with step size half
that of the previous one, until the variations in the global error stabilizes. Following the
prescriptions in \cite[\S4.2]{EcevitOzen16}, then we introduce a smooth partition of unity
confined to the regions $I_{IT_j},I_{ST_j},I_{SB_j}$ ($j=1,2$) and $I_{IT},I_{DS}$, and optimize
the shapes of hat functions therein using a similar iterative procedure (see the left-most panes
in Figures \ref{fig:circle_mono_vs_chanvar}--\ref{fig:kite}). For further details, we refer to
\cite{Eruslu15}.

As for the choice of the integral equation, as we mentioned, the developments central to this paper
are independent of the integral equation used. However, in order to allow a simple performance comparison
with the aforementioned algorithms, we base our numerical implementations on the CFIE
wherein the integral operator and the right-hand side are given by
\begin{equation*}
	\mathcal{R}_k = \dfrac{1}{2} \, I + \mathcal{D} - ik \mathcal{S}
	\quad
	\text{and}
	\quad
	f_k = \dfrac{\partial u^{\rm inc}}{\partial \nu}  - ik u^{\rm inc}.
\end{equation*}
Here $\mathcal{S}$ is the acoustic single-layer integral operator and $\mathcal{D}$ is its
normal derivative, and they are defined by
\[	\mathcal{S} \eta(x) = \int_{\partial K} \Phi(x,y) \, \eta(y) \, ds(y),
	\qquad
	x \in \partial K,
\]
\[
	\mathcal{D} \eta(x) = \int_{\partial K} \dfrac{ \partial \Phi(x,y)}{\partial \nu(x)} \, \eta(y) \, ds(y),
	\qquad
	x \in \partial K,
\]
where $\nu(x)$ is the outward unit normal to ${\partial K}$.

\begin{figure}[htbp]
\begin{center}
\hbox{
	\subfigure[Unit circle: $\alpha = (1,0)$]
		{\includegraphics*[trim={2.2cm 1.6cm 1.6cm 1.2cm},scale=0.30,clip]{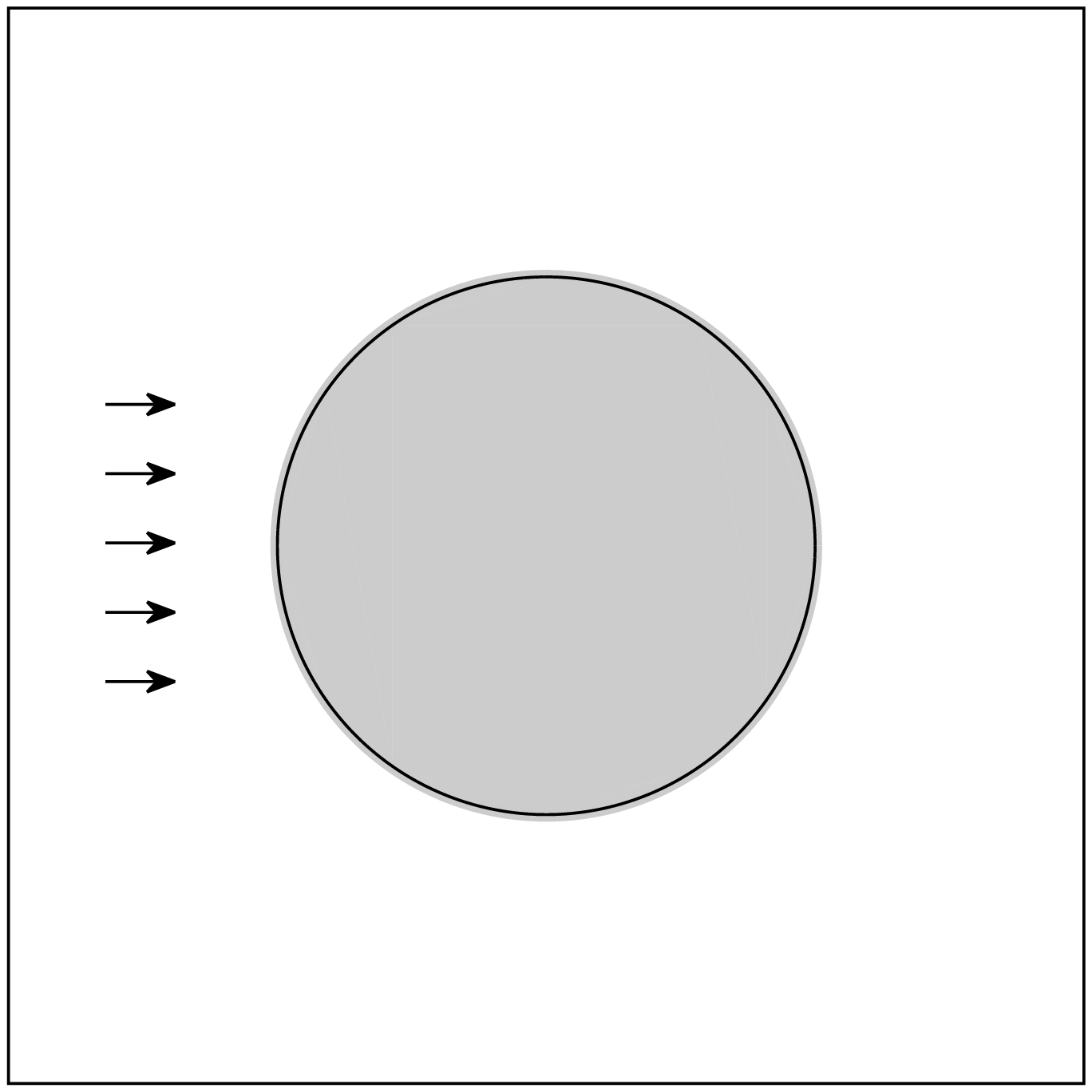}}
	\subfigure[Ellipse: $\alpha = (3,1)/\sqrt{10}$]
		{\includegraphics*[trim={2.6cm 1.3cm 1.8cm 1.2cm},scale=0.29,clip]{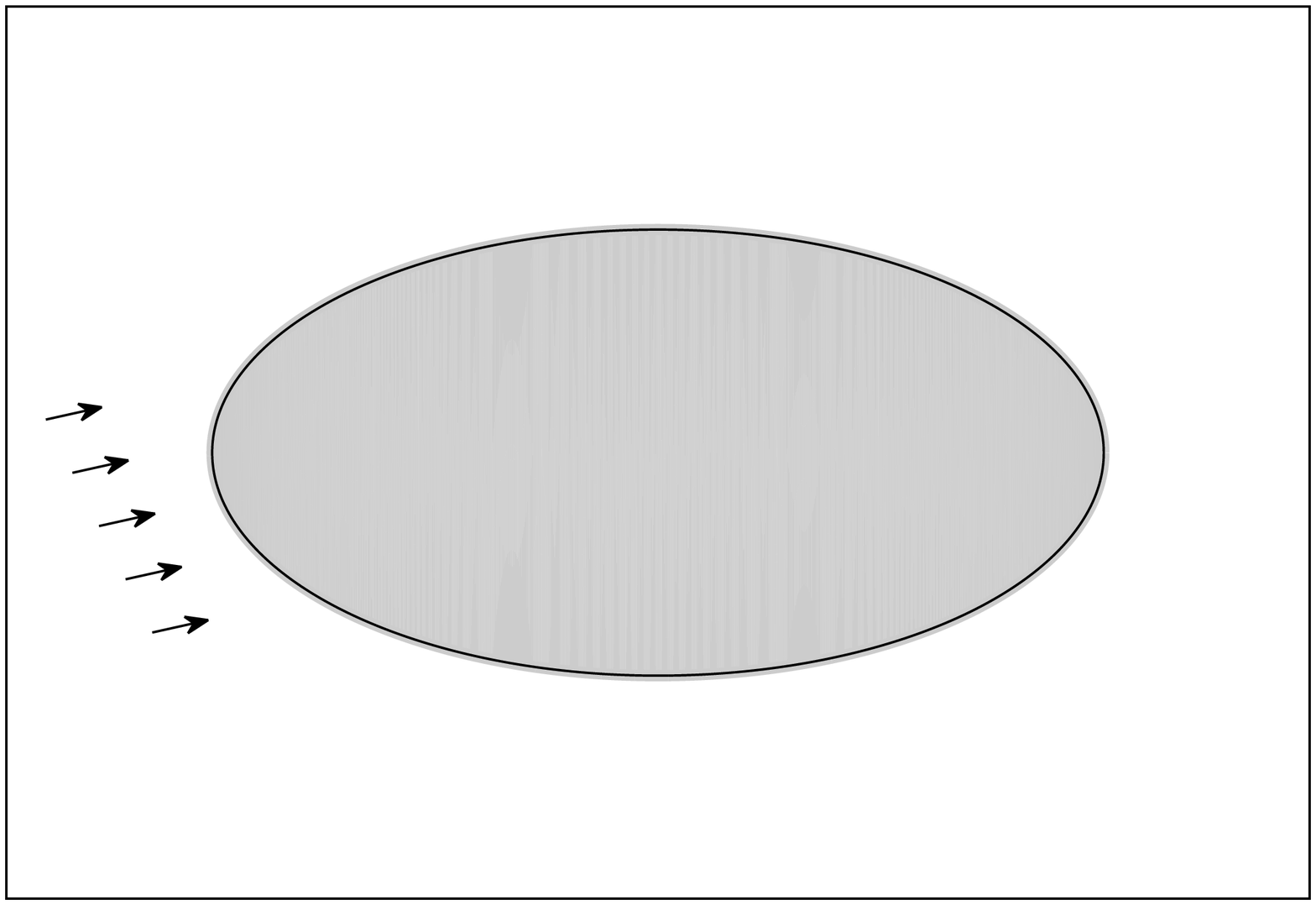}}
	\subfigure[Kite: $\alpha = (4,1)/\sqrt{17}$]
		{\includegraphics*[trim={2.2cm 1.4cm 1.6cm 1.2cm},scale=0.293,clip]{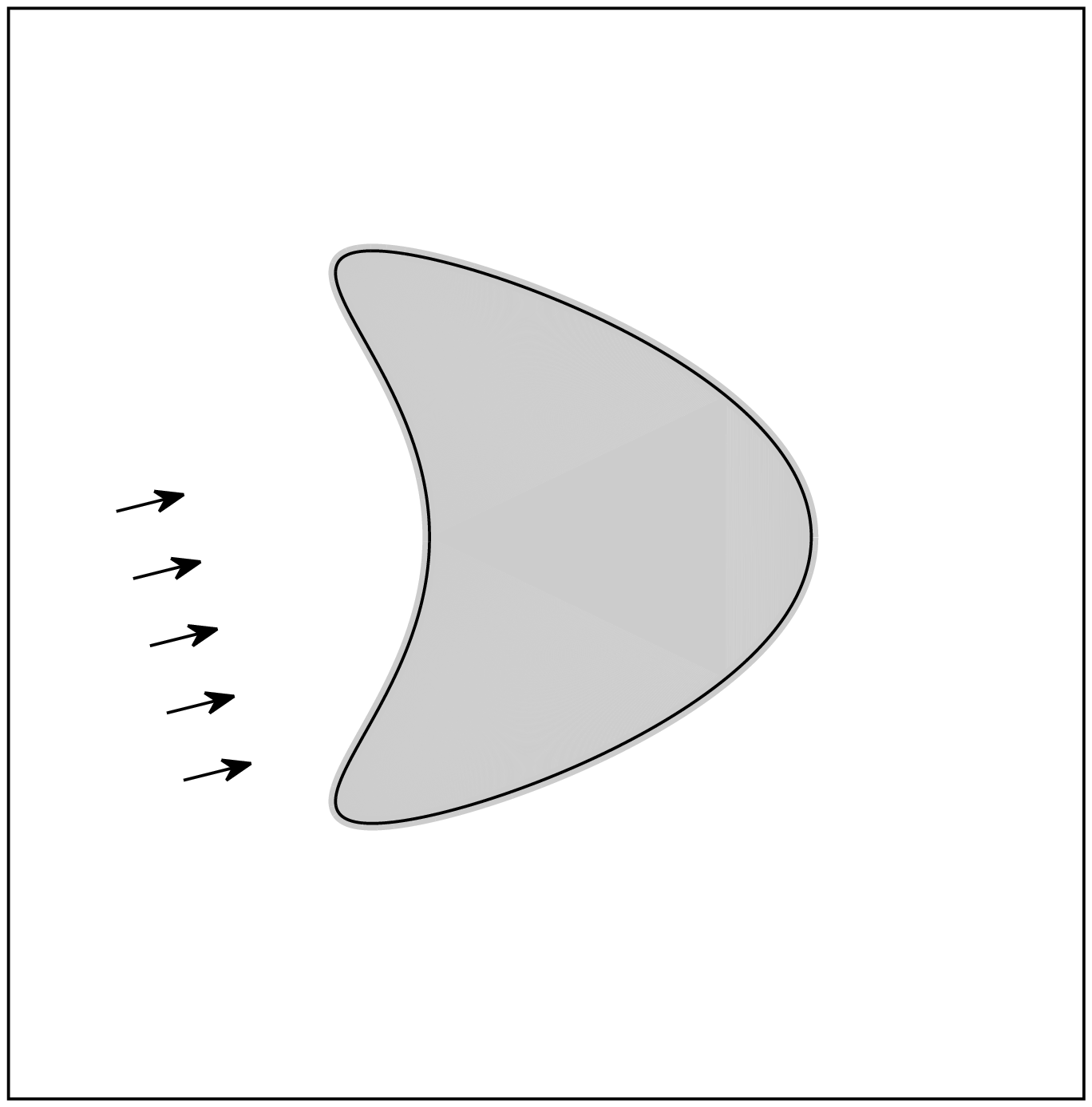}}
}
\end{center}
\caption{Single-scattering geometries and associated incidence directions $\alpha$.}
\label{fig:config}
\end{figure}

We consider three different single-scattering geometries (see Fig.~\ref{fig:config}) consisting of
the unit circle, the ellipse with major/minor axes (aligned with the $x/y$-axes) of $2/1$, and the kite
shaped obstacle given parametrically as $\{(\cos t + 0.65 \cos 2t - 0.65, 1.5 \sin t) \, : \, t \in [0,2\pi]\}$.
The unit circle is the standard example in the aforementioned references since
circles are the only two-dimensional obstacles for which explicit solutions are available
(through a straightforward Fourier analysis), and thus they allow an unquestionable performance
test for single-scattering solvers. As for the ellipse and kite shaped obstacles, we compare the
outcome of our numerical implementations with highly accurate reference solutions obtained by
a combination of the Nystr\"{o}m and trapezoidal rule discretizations applied to the CFIE
\cite{ColtonKress92}. Indeed, the double integrals appearing as the entries of Galerkin
matrices are also evaluated utilizing these rules for the inner integrals and the trapezoidal rule
for the outer integrals. In order to preserve the high-order approximation properties of these
numerical integration rules for smooth and periodic integrands, as in \cite{EcevitOzen16}, we additionally
utilize a smooth partition of unity confined to the regions on the boundary of the scatterer described
in Section~\ref{sec:3}. In each case, based on our experience in \cite{EcevitOzen16}, the number
of discretization points is chosen approximately as $10$ to $12$ points per wave length.

The results of our numerical experiments are presented in the following figures. They are arranged
so that the left panes display the support of direct summands forming the associated Galerkin
approximation spaces $\mathcal{A}_{\mathbf{d}}$ or $\mathcal{T}_{\mathbf{d}}$ we proposed in
\cite{EcevitOzen16} or their change of variables counterparts $\mathcal{A}_{\mathbf{d}}^{\mathcal{C}}$
or $\mathcal{T}_{\mathbf{d}}^{\mathcal{C}}$ we developed herein. On the other hand, the middle and
right panes depict, respectively, the corrseponding logarithmic relative errors
\[
	\log_{10} \left( \dfrac{\Vert \eta - \hat{\eta} \Vert_{L^2(\partial K)}}{\Vert \eta \Vert_{L^2(\partial K)}} \right)
	\quad
	\text{and}
	\quad
	\log_{10} \left( \dfrac{\Vert \eta - \hat{\eta} \Vert_{L^2(\partial K^{SR})}}{\Vert \eta \Vert_{L^2(\partial K^{SR})}} \right)
\]
versus the polynomial degrees used in each subregion (here
$\partial K^{SR}= \{ x \in \partial K : \alpha \cdot \nu(x) >0 \}$ is the shadow region).

Figures~\ref{fig:circle_mono_vs_chanvar} and \ref{fig:elips_tri_vs_chantri} concern
convex obstacles (the unit circle and the ellipse), and Figure~\ref{fig:kite} relates to
the non-convex single scattering configuration consisting of the kite. To prevent
repetitions, we have chosen to present a comparison of the solutions based on a utilization
of the approximation spaces
(i) $\mathcal{A}_{\mathbf{d}}$ and $\mathcal{A}_{\mathbf{d}}^{\mathcal{C}}$ for the
unit circle in Figure~\ref{fig:circle_mono_vs_chanvar}, (ii)
$\mathcal{T}_{\mathbf{d}}$ and $\mathcal{T}_{\mathbf{d}}^{\mathcal{C}}$ for the
ellipse in Figure~\ref{fig:elips_tri_vs_chantri}, and (iii)
$\mathcal{A}_{\mathbf{d}}^{\mathcal{C}}$ and $\mathcal{T}_{\mathbf{d}}^{\mathcal{C}}$
for the kite in Figure~\ref{fig:kite}. 

The first row in Figure~\ref{fig:circle_mono_vs_chanvar} displays the results based on
an implementation of $\mathcal{A}_{\mathbf{d}}$ with $m=1$ (which corresponds
to $8$ direct summands), and the second row to $\mathcal{A}_{\mathbf{d}}^{\mathcal{C}}$
with $J=6$ (giving rise to $6$ direct summands). As is apparent, solutions based on
$\mathcal{A}_{\mathbf{d}}^{\mathcal{C}}$ give rise to similar global accuracy as those obtained
by $\mathcal{A}_{\mathbf{d}}$ but with a reduction of $\%25$ in the total number of degrees
of freedom. Moreover, $\mathcal{A}_{\mathbf{d}}^{\mathcal{C}}$ provides significantly
improved accuracies in the shadow region.

\begin{figure}[htbp]
	\begin{center}
		\includegraphics[height=4.05cm, trim={2.2cm 0.3cm 0 0},clip]{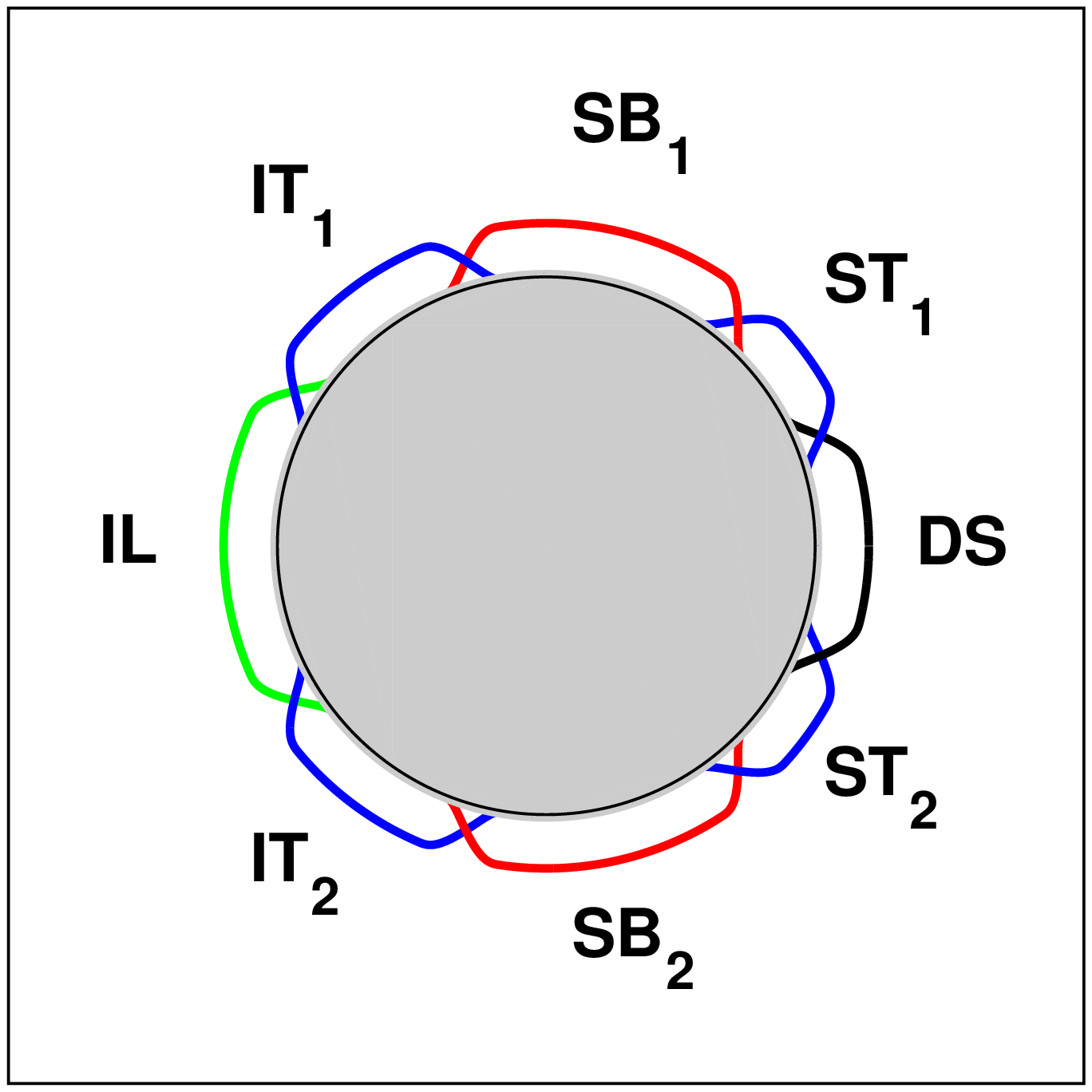} 
		\includegraphics[height=4cm, trim={3.5cm 0 15.5cm 0},scale=0.4,clip]{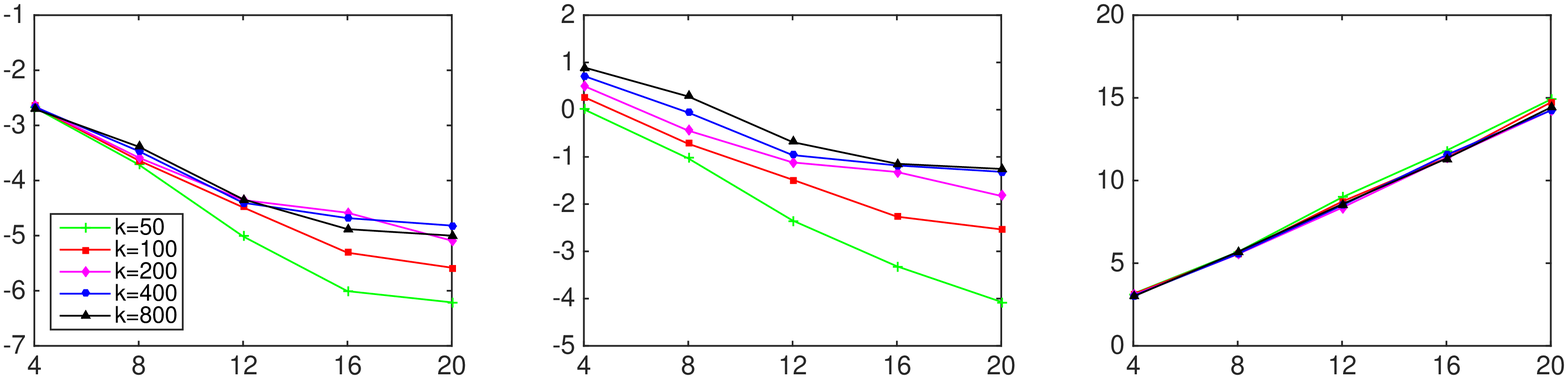} 
		\\
		\includegraphics[height=4.05cm, trim={2.2cm 0.3cm 0 0.},scale=0.3,clip]{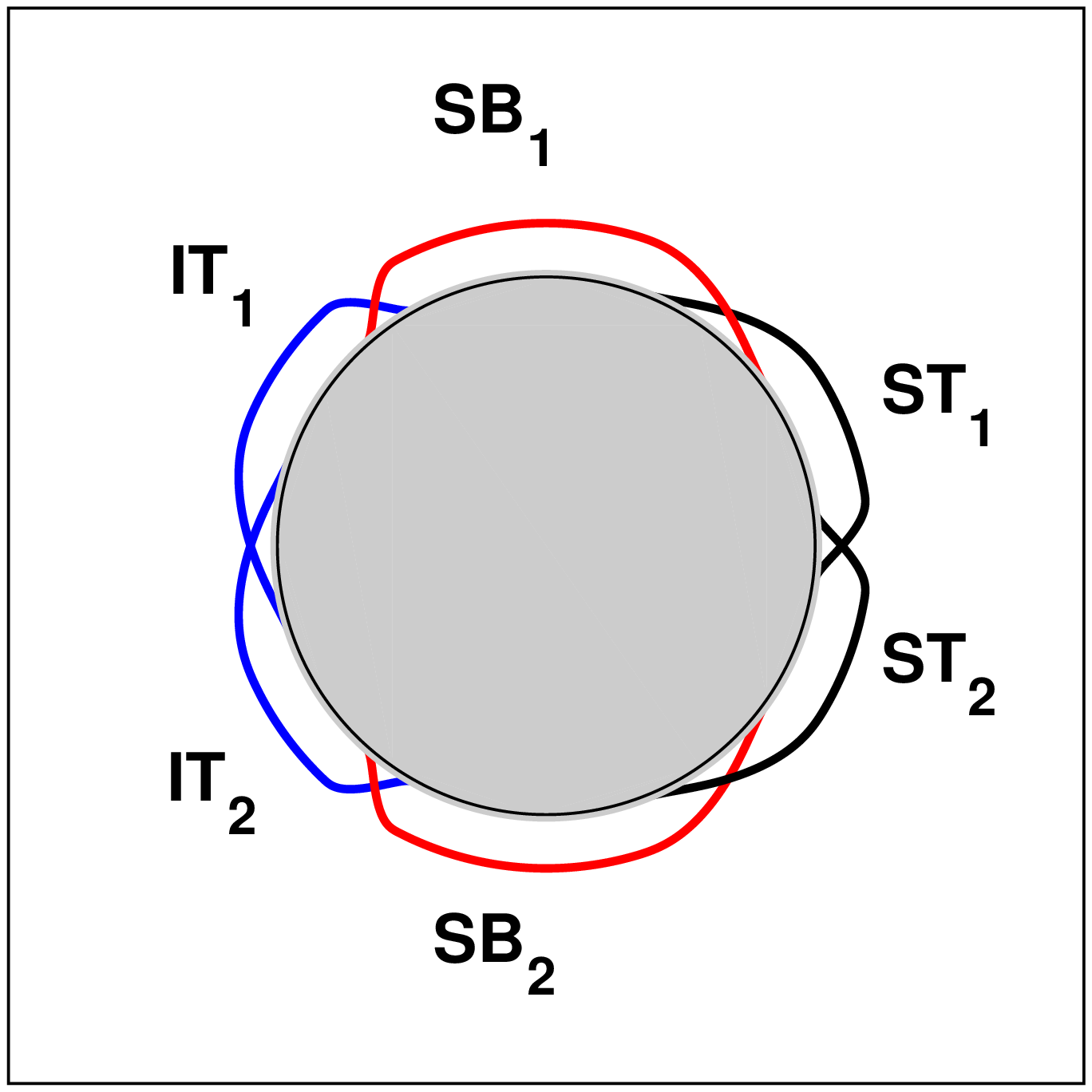}
		\includegraphics[height=4cm, trim={3.5cm 0 15.5cm 0},scale=0.4,clip]{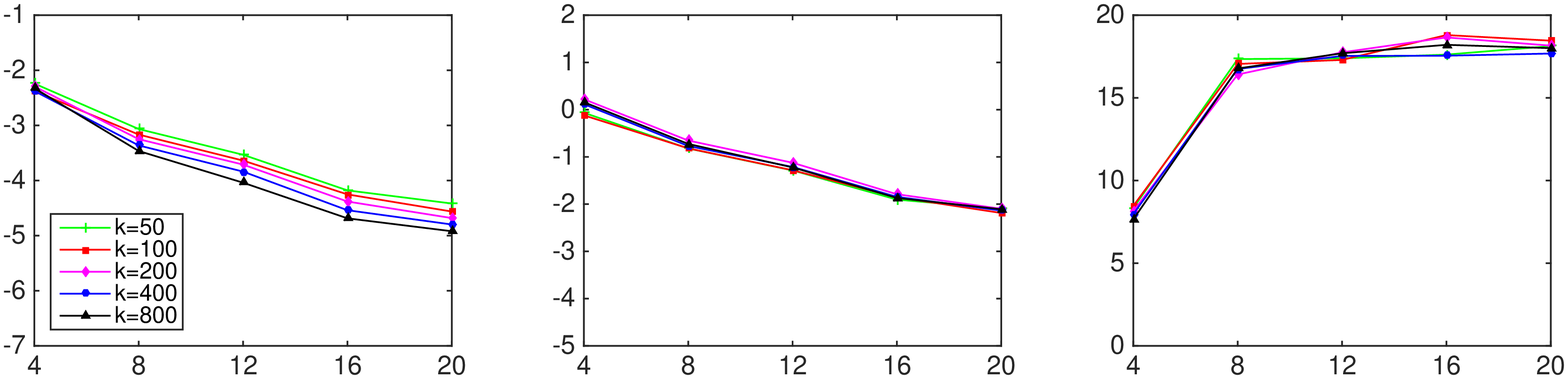}
	\end{center}
	\caption{$\mathcal{A}_{\mathbf{d}}$ (first row) vs. $\mathcal{A}_{\mathbf{d}}^{\mathcal{C}}$ (second row) for the unit circle.}
	\label{fig:circle_mono_vs_chanvar}
\end{figure}
\begin{figure}[htbp]
	\begin{center}
		\includegraphics[height=3.55cm, trim={2.2cm 0.3cm 0 0},clip]{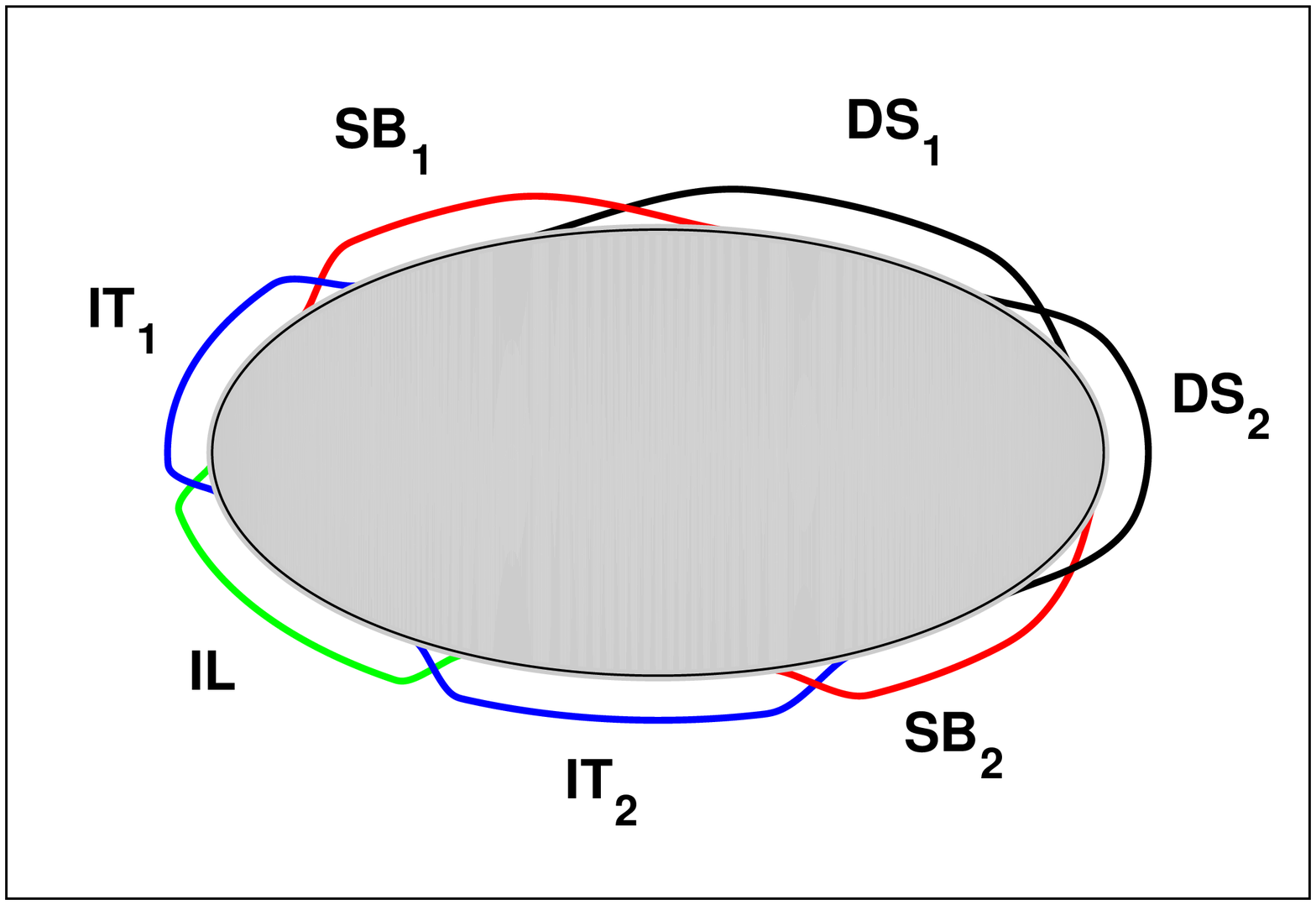}
		\includegraphics[height=3.50cm, trim={3.5cm 0 15.5cm 0},scale=0.4,clip]{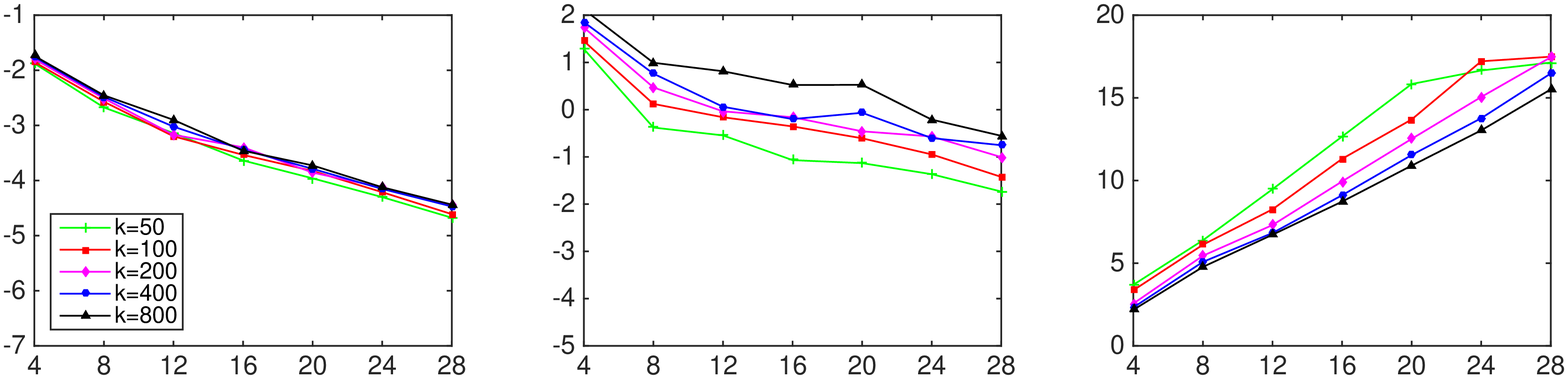}
		\includegraphics[height=3.55cm, trim={2.2cm 0.3cm 0 0},clip]{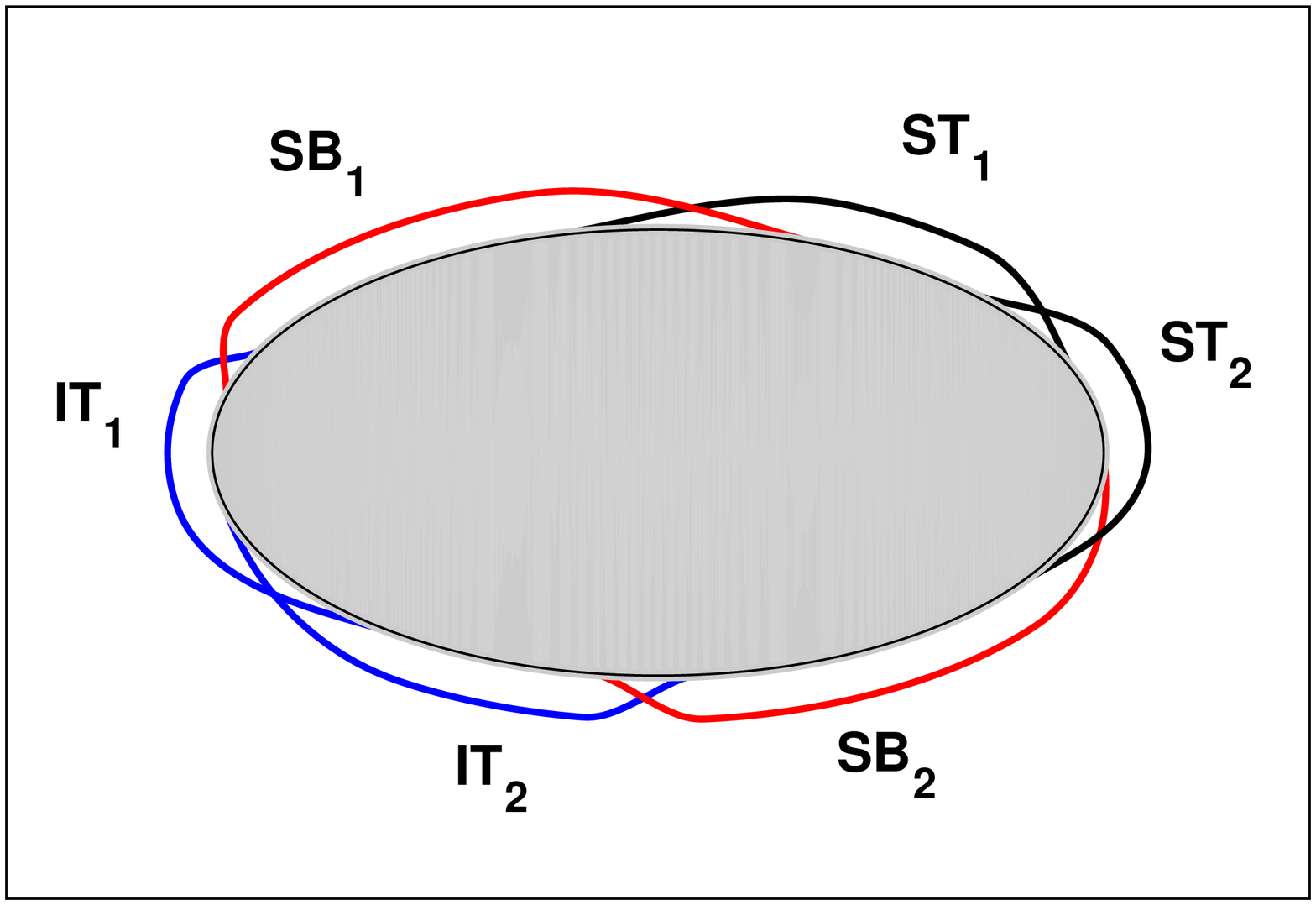}
		\includegraphics[height=3.50cm, trim={3.5cm 0 15.5cm 0},scale=0.4,clip]{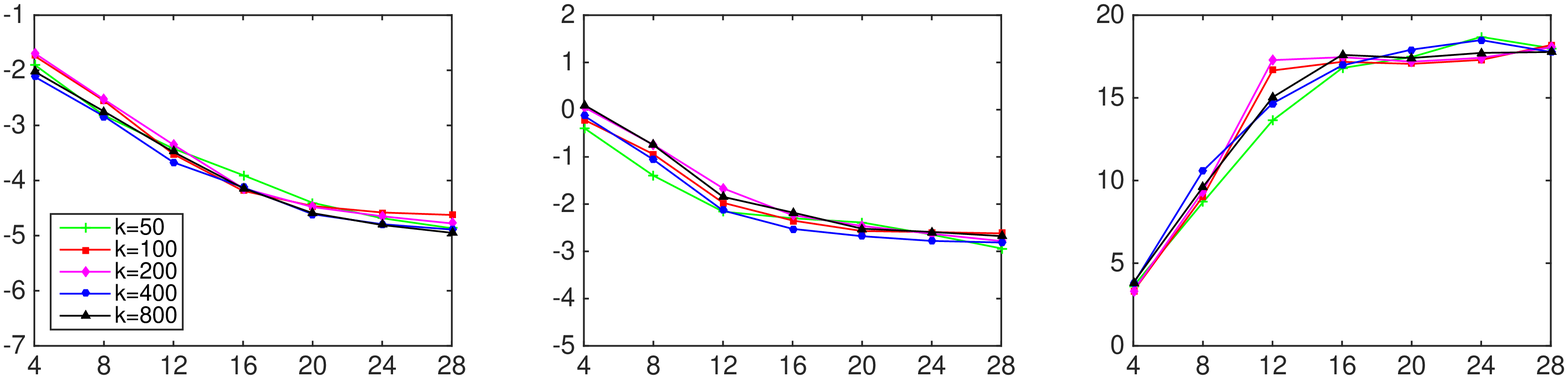}
	\end{center}
	\caption{A variant of $\mathcal{T}_{\mathbf{d}}$ (first row) vs. $\mathcal{T}_{\mathbf{d}}^{\mathcal{C}}$ (second row) for the ellipse.}
	\label{fig:elips_tri_vs_chantri}
\end{figure}

Figure~\ref{fig:elips_tri_vs_chantri} presents the numerical results associated with the ellipse obtained
by a variant of the $\mathcal{T}_{\mathbf{d}}$ spaces with $7$ direct summands (see \cite{EcevitOzen16}
for details) in the first row, and by the $\mathcal{T}_{\mathbf{d}}^{\mathcal{C}}$ spaces with $6$ direct
summands ($J=6$) in the second row. In this case, while the $\mathcal{T}_{\mathbf{d}}^{\mathcal{C}}$
spaces provide a slight improvement in the global accuracy over $\mathcal{T}_{\mathbf{d}}$ with savings
of about $\%14$ in the total number of degrees of freedom, the approximations they provide in the shadow
region are several orders of magnitude better.
\begin{figure}[htbp]
	\begin{center}
		\includegraphics[height=4.05cm, trim={2.2cm 0.3cm 0 0},clip]{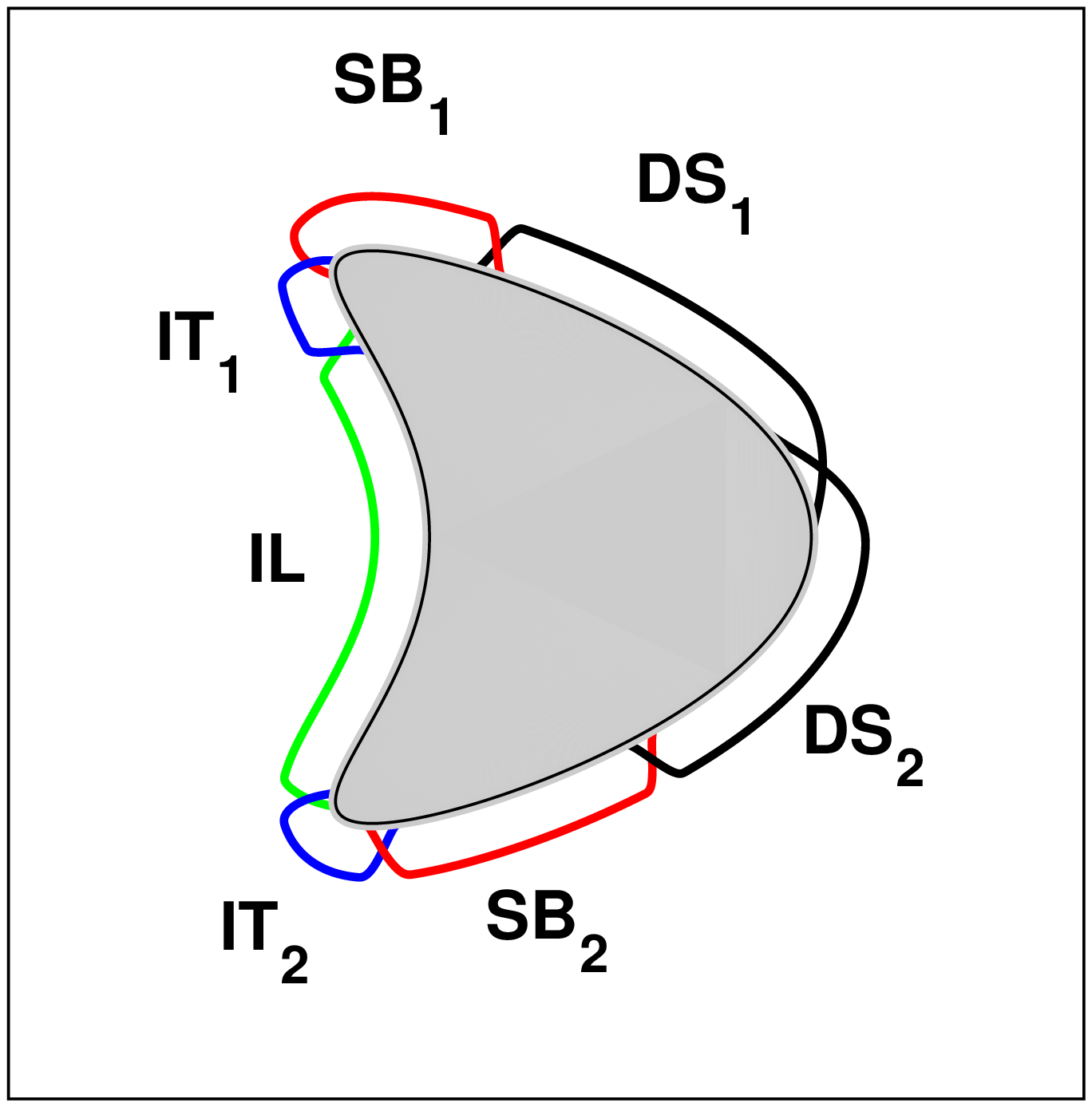}
		\includegraphics[height=4cm, trim={3.5cm 0 15.5cm 0},scale=0.4,clip]
		{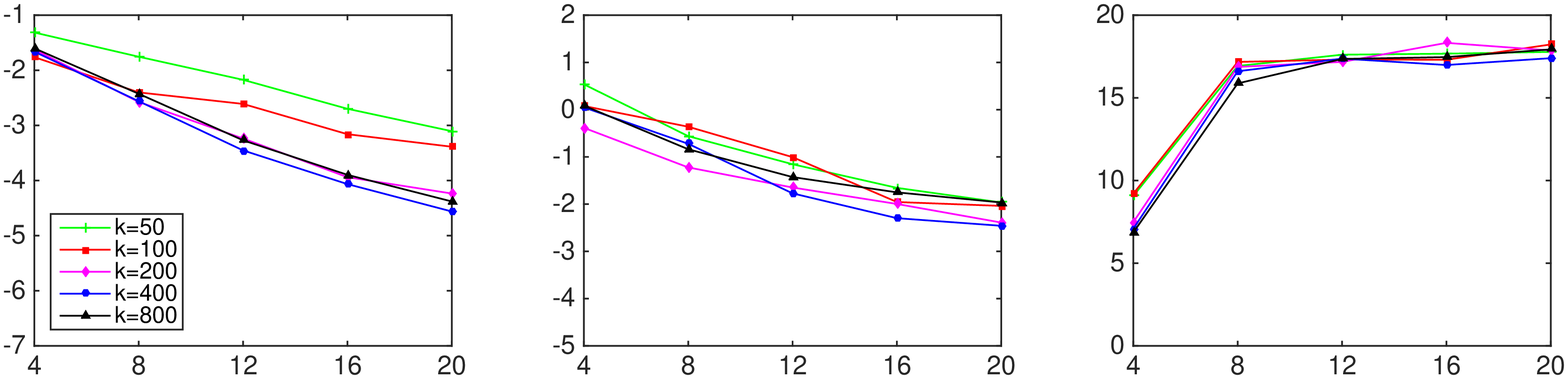}
		\includegraphics[height=4.05cm, trim={2.2cm 0.3cm 0 0},clip]{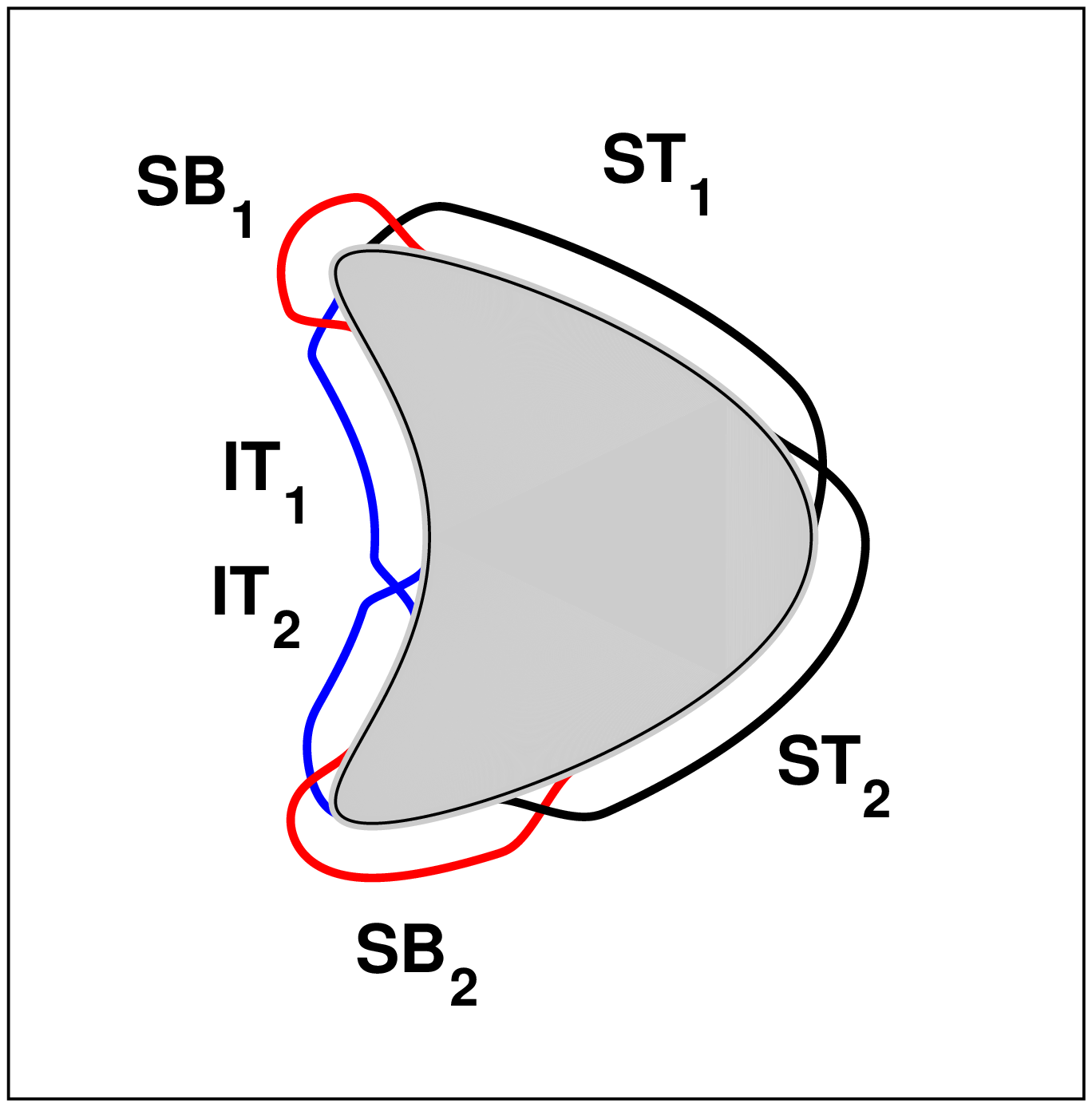}
		\includegraphics[height=4cm, trim={3.5cm 0 15.5cm 0},scale=0.4,clip]{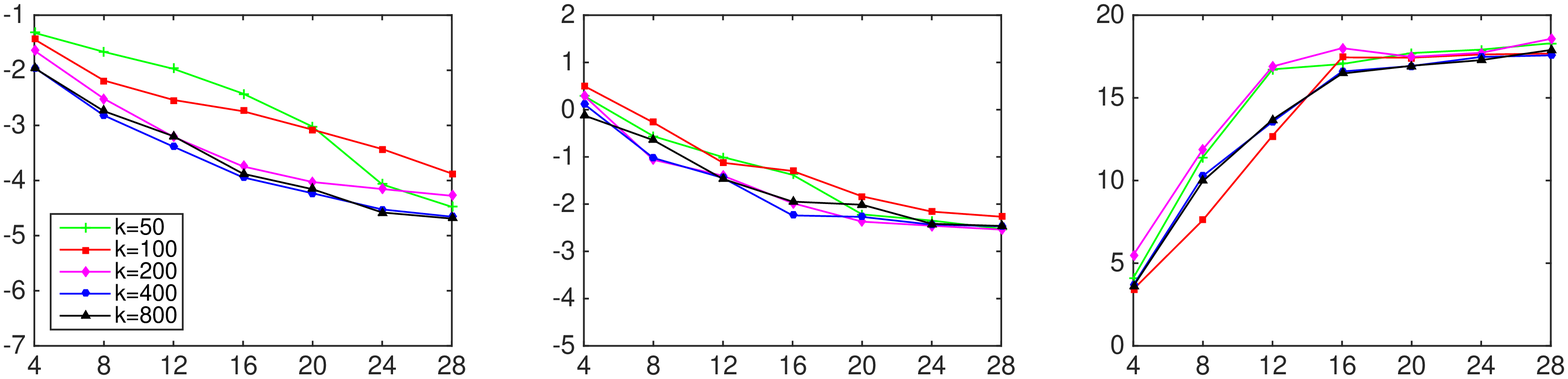}
	\end{center}
	\caption{$\mathcal{A}_{\mathbf{d}}^{\mathcal{C}}$ (first row) vs. $\mathcal{T}_{\mathbf{d}}^{\mathcal{C}}$ (second row) for the kite.}
	\label{fig:kite}
\end{figure}

Finally, considering the more general single-scattering geometry of the kite, in Figure~\ref{fig:kite}
we present a comparison of the solutions obtained by an implementation of the $\mathcal{A}_{\mathbf{d}}^{\mathcal{C}}$
and $\mathcal{T}_{\mathbf{d}}^{\mathcal{C}}$ spaces. In this case, motivated with our experience in
\cite{EcevitOzen16} we have constructed $\mathcal{A}_{\mathbf{d}}^{\mathcal{C}}$ spaces based on $7$
direct summands whereas we have used $J=6$ direct summands in forming $\mathcal{T}_{\mathbf{d}}^{\mathcal{C}}$
spaces (see the left-most pane in Fig.~\ref{fig:kite}). As Figure~\ref{fig:kite} displays, while the algebraic
$\mathcal{A}_{\mathbf{d}}^{\mathcal{C}}$ and trigonometric $\mathcal{T}_{\mathbf{d}}^{\mathcal{C}}$ Galerkin
approximation spaces based on changes of variables are both well adapted to more general non-convex single-scattering
geometries, the latter displays a slightly better performance for higher frequencies in terms of both
the global and shadow region errors.

\section{Conclusions}
In this paper, we proposed a class of \emph{Galerkin boundary element methods based on novel
changes of variables} for the solution of two-dimensional single-scattering problems. The Galerkin
approximation spaces, generated in the form of a direct sum of algebraic or trigonometric polynomial
spaces weighted by the oscillations in the incident field of radiation, are additionally coupled with
novel frequency dependent changes of variables in the transition regions. As we have shown,
this construction ensures that the global approximation spaces are perfectly matched with the
changes in the boundary layers of the solutions with increasing wave number $k$, and they provide
remarkable savings over their counterparts in \cite{EcevitOzen16} in regards to the total number
of degrees of freedom necessary to obtain a prescribed accuracy. Most notably, the schemes
proposed herein display the capability of delivering several orders of magnitude more accurate
solutions in the shadow regions.

\bibliographystyle{abbrv}
\bibliography{EcevitReferences}

\end{document}